\newcommand{\R}{{\mathbb R}}
\newcommand{\C}{\mathbb{C}}
\newcommand{\N}{\mathbb{N}}
\newcommand{\whg}{{\widehat{G}}}
\newcommand{\be}{\begin{equation}}
\newcommand{\ee}{\end{equation}}
\newtheorem{prop}{Proposition}
\newtheorem{lemma}{Lemma}
\newtheorem{defin}{Definition}
\newtheorem{theor}{Theorem}
\newtheorem{corol}{Corollary}
\begin{document}

\title{Examples of inconsistency in optimization by expected improvement}
\author{D. Yarotsky\footnote{Datadvance llc, Moscow,
\href{mailto:yarotsky@datadvance.net}{\nolinkurl{yarotsky@datadvance.net}}}
\footnote{Institute for Information Transmission Problems, Moscow,
\href{mailto:yarotsky@iitp.ru}{\nolinkurl{yarotsky@iitp.ru}}
}
}
\date{March 31, 2012}
\maketitle

\begin{abstract}
We consider the 1D Expected Improvement optimization based on Gaussian processes having spectral densities converging to zero faster than exponentially. We give examples of problems where the optimization trajectory is not dense in the design space. In particular, we prove that for Gaussian kernels there exist smooth objective functions for which the optimization does not converge on the optimum.

\end{abstract}

\tableofcontents

\section{Introduction}
\paragraph{The optimization problem.}
Consider a global ``black-box'' optimization problem
\begin{equation} \label{eq:mainopt}
f(x) \longrightarrow \min_{x\in \mathcal{D}}
\end{equation}
For the moment, suppose that $\mathcal{D}$ is a compact metric space, and $f$ a continuous real-valued  function on $\mathcal{D}$, so that  the minimum $f^* = \min_{x\in \mathcal{D}}f(x)$ exists.  
Consider an optimization procedure seeking this minimum. 
In black-box optimization, such a procedure consists of a sequence of iterations; each iteration suggests for evaluation a new point of the set $\mathcal D$ based on the already observed values of the  objective function $f$. 
More precisely, we can say that, algorithmically, optimization is defined by the initial point $x_1\in\mathcal{D}$ and a family of mappings 
\[\mathcal A_K: (\mathcal D\times\R)^K\to\mathcal D, \quad K=1,2,\ldots.\] The optimization trajectory $\{x_K\}_{K=1}^\infty$ is then determined by relations
\begin{equation}\label{eq:history}
x_{K+1} = \mathcal{A}_K\Big(\{x_k, f(x_k)\}_{k=1}^K\Big),\quad K=1,2,\ldots.\end{equation}
Any practical optimization is terminated at some step $K$, and the approximate minimum $f^*_K$ is then defined by \[f^*_K := \min_{k=1,\ldots,K}f(x_k).\]
It is then natural to call optimization \emph{consistent} if  
\[ \lim_{K\to\infty}f^*_K = f^*.\]

The following proposition is a very simple but important criterion of consistency on the space of continuous functions \cite{Torn:1989:GO:75021}.
\begin{prop}\label{thm:prop1}
An optimization algorithm defined by mappings $\mathcal A_K$ is consistent for all $f\in C(\mathcal D)$ if and only if for any continuous $f$ the trajectory $\{x_K\}_{K=1}^\infty$ generated by  \eqref{eq:history} is dense in $\mathcal D$.
\end{prop}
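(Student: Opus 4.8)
The plan is to prove the two implications of the equivalence separately, treating the reverse implication (density $\Rightarrow$ consistency) as routine and concentrating effort on the forward implication, which I would establish by contraposition.

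For the direction ``density for all $f$ $\Rightarrow$ consistency for all $f$'', I would fix an arbitrary $f\in C(\mathcal D)$ and a minimizer $x^*\in\mathcal D$ with $f(x^*)=f^*$. Given $\varepsilon>0$, continuity of $f$ yields an open neighbourhood $U\ni x^*$ on which $f<f^*+\varepsilon$. Since the trajectory $\{x_K\}$ is dense by hypothesis, some $x_{k_0}$ lies in $U$, whence $f^*_K\le f(x_{k_0})<f^*+\varepsilon$ for all $K\ge k_0$; combined with the trivial bound $f^*_K\ge f^*$, this gives $\lim_K f^*_K=f^*$. As $f$ was arbitrary, consistency holds for all $f$.

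For the converse I would prove the contrapositive: assuming some $f_0\in C(\mathcal D)$ has a non-dense trajectory, I would exhibit a $g\in C(\mathcal D)$ on which the algorithm fails to be consistent. Non-density provides an open ball $B=B(y,r)$ disjoint from $\{x_K\}$. The crucial point is that the algorithm is \emph{adaptive}: by \eqref{eq:history} the trajectory depends on $f$ only through the observed pairs $(x_k,f(x_k))$. Hence, if I construct $g$ agreeing with $f_0$ outside $B$, an induction on $K$ shows that the $g$-trajectory coincides with the $f_0$-trajectory: the shared initial point $x_1$ lies outside $B$, and if $x_1,\dots,x_K$ coincide and avoid $B$ then $g(x_k)=f_0(x_k)$ for $k\le K$, so $x_{K+1}=\mathcal A_K(\{x_k,g(x_k)\}_{k=1}^K)=\mathcal A_K(\{x_k,f_0(x_k)\}_{k=1}^K)$ is the same point and again avoids $B$. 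It then remains to arrange that $g$ has its global minimum strictly below everything the trajectory sees. Writing $m=\min_{\mathcal D}f_0$, I would take a tent function $\phi(x)=\max\{0,\,1-d(x,y)/\rho\}$ supported in $B$ (choosing $\rho<r$) and set $g=f_0-c\,\phi$ with $c$ large enough that $g(y)<m$. Then $g$ is continuous, $g=f_0$ off $B$, and since the trajectory never enters $B$ we obtain $g^*_K=\min_{k\le K}f_0(x_k)\ge m>g(y)\ge g^*$ for every $K$, so $\lim_K g^*_K\ge m>g^*$ and the algorithm is inconsistent on $g$.

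I expect the main obstacle to be the inductive invariance argument: one must verify carefully that perturbing $f_0$ only inside the untouched ball $B$ leaves the entire adaptive trajectory unchanged. This is where the black-box, history-dependent structure \eqref{eq:history} is essential and where a naive, non-adaptive intuition would fail. The bump construction itself is elementary, using only the metric structure already assumed on $\mathcal D$.
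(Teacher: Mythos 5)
Your proof is correct and follows essentially the same route as the paper: the ``density $\Rightarrow$ consistency'' direction is the routine continuity argument the paper calls clear, and your contrapositive argument---modifying $f_0$ by a bump inside an untouched ball so the minimum moves there, then checking inductively that the adaptive trajectory is unchanged---is exactly the modification argument the paper sketches for necessity. Your write-up merely supplies the details (the tent function and the induction) that the paper leaves implicit.
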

The sufficiency is clear; the necessity follows since any continuous function can be modified, preserving its continuity, in any open set so as to make the function attain its optimum in this open set.  

In many  practical applications, the objective function $f$ is expensive to evaluate, and the mappings $\mathcal A$ can then be quite complex and resource--intensive; in particular they often involve solving  auxiliary optimization problems. A popular modern approach to global black-box optimization is stochastic Bayesian optimization where these auxiliary problems are stated using some prior assumptions of probabilistic nature. In this paper we will consider one of the most natural and well-known methods of this type --  optimization by Expected Improvement \cite{Mockus_Tiesis_Zilinskas_1978, 
mockus1989bayesian,
schonlau1,
Schonlau:1997:CEG:926099,
JoScWe97a,
Jones:1998:EGO:596070.596218,
Jones:2001:TGO:596097.596412}

\paragraph{The Expected Improvement algorithm (EI).}   In this method, we think of the optimized function $f$ as a realization of a stochastic process $(\xi_x)_{x\in\mathcal D}$. Assuming the probability measure associated with the process is known, we define the mappings $\mathcal A_K$ by maximizing the expectation of the improvement in the best known value of the objective function resulting from its additional evaluation,  conditioned on the set $\{\xi_{x_k} = f(x_k)\}_{k=1}^K$. Precisely, we define
\begin{equation}\label{eq:ai}
\mathcal{A}_K\Big(\{x_k, f(x_k)\}_{k=1}^K\Big)
= \arg \max_{x\in \mathcal D} I_{K;\{x_k, f(x_k)\}_{k=1}^K} (x),
\end{equation}
where
\[
I_{K;\{x_k, f(x_k)\}_{k=1}^K} (x) 
= \mathsf{E} \big( f^*_K - \min (f^*_K, \xi_x)
\big| \{\xi_{x_k} = f(x_k)\}_{k=1}^K\big).
\]
In practice, the stochastic process $\xi_x$ is usually Gaussian, which allows one to numerically solve the optimization problem
\begin{equation}\label{eq:opt2}
I_{K;\{x_k, f(x_k)\}_{k=1}^K} (x)\longrightarrow \max_{x\in \mathcal D}
\end{equation} for moderate values of $K$. Namely, assume that $\xi_x$ is a centered Gaussian process with the covariance 
\[ G(x,y) = \mathsf{E}(\xi_x\xi_y).\]
Then $\xi_x$ conditioned on $\{\xi_{x_k} = f(x_k)\}_{k=1}^K$ is also a Gaussian random variable:
\[
\xi_x|\{\xi_{x_k} = f(x_k)\}_{k=1}^K\sim\mathcal N(
m_{x; \{x_k, f(x_k)\}_{k=1}^K}, \sigma^2_{x; \{x_k\}_{k=1}^K}),
\] where $m$ and $\sigma^2$ denote the conditional mean and variance. 
Note that since the process is Gaussian, the variance depends on $\{x_k\}_{k=1}^K$ but not on $\{f(x_k)\}_{k=1}^K.$   A straightforward calculation shows that 
\begin{align}
\label{eq:g}
m_{x; \{x_k, f(x_k)\}_{k=1}^K} & = \mathbf g_{K,x}^t\mathbf G_K^{-1}\mathbf f_K,\\
\label{eq:g2}
\sigma^2_{x; \{x_k\}_{k=1}^K} & = G(x,x) - \mathbf g_{K,x}^t\mathbf G_K^{-1}\mathbf g_{K,x},
\end{align} 
where
\begin{align*}
\mathbf f_K        &= (f(x_1),\ldots,f(x_K))^t,\\
\mathbf g_{K,x}  &= (G(x,x_1),\ldots,G(x,x_K))^t,\\
\mathbf G_K      &= (G(x_k,x_l))_{k,l=1}^K.
\end{align*}
Throughout the paper, we will assume that the kernel $G(x,y)$ is strictly positive definite, which in particular ensures that $\mathbf G_K$ in \eqref{eq:g},\eqref{eq:g2} is invertible. 

If $G$ is continuous, then the conditional mean and variance continuously depend on $x$, which implies  existence of the maximum in \eqref{eq:opt2}. The maximum can be attained at more than one point; any of them can be taken as $x_{K+1}$. Up to this ambiguity, the EI algorithm is completely determined by the kernel $G$.

Note that if the kernel $G$ is strictly positive definite, then 
\[
I_{K;\{x_k, f(x_k)\}_{k=1}^K} (x) 
\begin{cases}
=0, & x\in\{x_k\}_{k=1}^K,\\
>0, & x\notin\{x_k\}_{k=1}^K,
\end{cases}
\]
so that the maximizer $x_{K+1}\notin \{x_k\}_{k=1}^K$, i.e., all the points of the trajectory $\{x_k\}_{k=1}^\infty$ are different.

Consider the Hilbert space $L^2(\Omega, \mathsf P)$, where $(\Omega, \mathsf P)$ is the probability space on which the process $\xi_x$ is defined.  
Then one can geometrically interpret the conditional variance $\sigma^2_{x; \{x_k\}_{k=1}^K}$ as the squared distance between the vector $\xi_x$ and the linear span of the vectors $\{\xi_{x_k}\}_{k=1}^K$ in $L^2(\Omega, \mathsf P)$.
  
Practical implementations of the EI algorithm often use somewhat more complex modelling than described above, based on kriging \cite{Jones:1998:EGO:596070.596218}. 
This approach includes additional polynomial trends in the model; also, the covariance function is assumed to depend on a few parameters which are adjusted at each iteration using cross-validation or maximum likelihood estimates. We will not consider these complications in this paper. 

We will fix a kernel $G$ and will treat the EI algorithm described above as ideally implemented with this kernel, in the sense that the auxiliary problem \eqref{eq:opt2} is assumed to be exactly solved at each iteration $K$.
We will  then be interested in the convergence properties of the resulting sequences $x_K$ and $f(x_K)$.  
 
\paragraph{Previous rigorous results.}
EI is a popular approach to global optimization in modern engineering applications, but not much has been proved about it rigorously. 
If $\xi_x$ is the Wiener process or its stationary version, the Ornstein-Uhlenbeck process, on a segment in $\R$, 
then, using the Markov property, it is not hard to check that the EI optimization is consistent for continous objective functions, see \cite{Locatelli:1997:BAO:596056.596117}. 
In \cite{Vazquez20103088,VAZQUEZ:2010:HAL-00440827:1}, Vazquez and Bect considered the general  case of compact subsets of $\R^n$ 
and proved the convergence of the EI algorithm for sufficiently ``rough'' stationary processes $\xi_x$, 
on objective functions $f$ from the reproducing-kernel Hilbert space (RKHS) associated with $\xi_x$. 

As an intermediate step in their proof, these authors consider what they call the \emph{No-Empty-Ball} (NEB) property of the process $\xi$:

\begin{defin}
 The process $\xi_x$ is said to have the NEB property if for all sequences $\{x_k\}_{k=1}^{\infty}$ (not necessarily given by \ref{eq:history}) and all points $x$ in $\mathcal D$ the following two conditions are equivalent:
\begin{enumerate}
\item $x$ belongs to the closure of $\{x_k\}_{k=1}^{\infty}$;
\item the conditional variance $\sigma^2_{x; \{x_k\}_{k=1}^K}\to 0$  as $K\to\infty$.
\end{enumerate}
\end{defin}
The first condition clearly implies the second for processes with a continuous covariance function, but the opposite direction is more subtle.  
In particular, Vazquez and Bect  prove that the NEB property is violated by Gaussian processes with a Gaussian covariance function.   
They show, however, that the NEB property holds for a stationary process provided its spectral density goes to zero sufficiently slowly, namely if its inverse is polynomially bounded. 
Additionaly, they show that if a Gaussian process has the NEB property and the objective function is from the corresponding RKHS, then the optimization trajectory is dense in $\mathcal D$, and hence optimization is consistent on this space.

Vazquez and Bect also show that for Gaussian processes with the NEB property the optimization trajectory is dense almost surely, 
if the optimized function is a realization of the process.  

Recently, Bull \cite{bull} has obtained rigorous convergence rates for objective functions from the RKHS of the process. 

Some rigorous results are also available for certain stochastic optimization algorithms different from but closely related to EI, see, e.g., Gutmann \cite{Gutmann:2001:RBF:596093.596381}.

Finally, though in this article we don't consider covariance functions with adaptively adjusted parameters, 
we mention that these more general kinds of EI optimization are known to be inconsistent in some cases \cite{Locatelli:1997:BAO:596056.596117, bull}.

\section{Results}
As discussed above, the existing rigorous results about convergence of the EI optimization are mostly proofs of convergence under certain assumptions,
namely when the NEB property holds and/or the objective function belongs to the RKHS associated with the process.
At the same time, little is known rigorously about (in)consistency of the EI optimization when these assumptions are violated, though, for example, 
the Gaussian kernel is one of the most common kernels used in practical modelling \cite{forrester},
while in engineering applications one rarely expects strong regularity of the objective function.

Vazquez and Bect \cite{Vazquez20103088,VAZQUEZ:2010:HAL-00440827:1} conjecture consistency for all continuous objective functions provided the process has the NEB property. The result of Locatelly \cite{Locatelli:1997:BAO:596056.596117} confirms this in the case of the  Wiener process.

The goal of this paper is to examine convergence of the EI algorithm for analytic Gaussian processes. 
More precisely, we will consider kernels with spectral densities which very rapidly converge to 0; this property is related to analyticity by Paley-Wiener--type theorems (see, e.g., \cite{katznelson2004introduction}, page 209). 
Our main result is a class of examples demonstrating some lack of consistency of the EI optimization in this case, for objective functions which are  not analytic.
We thus show, in particular, that the EI optimization cannot be fully consistent if both the NEB and RKHS assumptions are dropped. 

We will consider only 1D models in this paper and let $\mathcal D = [-1,1]$. 
We consider a translation invariant covariance $G$, i.e. 
\[G(x',x'')=G(x'-x'',0)\equiv G(x'-x''),\] 
and assume that it has a spectral density $\whg(t), t\in\R$:
\[G(x) = \int_{\R}\whg(t)e^{itx}dt,\quad 
\whg(t) = \frac{1}{2\pi}\int_{\R}G(x)e^{-itx}dx.\]
Since $G$ is real and even, such is $\whg$: $\whg(t)=\whg(-t)\in\R$. 
We assume that  $\whg(t)>0$ for all $t$, so that the kernel $G$ is strictly positive definite.

We start by showing, as a preparation for the main result, that if \begin{equation}\label{eq:exp2}\whg(t)\le c_0e^{-c|t|}, \end{equation}
with some $c_0,c>0$, then the process $\xi_x$ does not have the NEB property. 
Condition \eqref{eq:exp2} implies, in particular, that $G$ is analytic in the strip $|\operatorname{Im} (z)|<c$.
  
\begin{theor}\label{thm:1}
Let $(\xi_x)_{x\in [-1,1]}$ be a centered stationary Gaussian process defined on a probability space $(\Omega, \mathsf P)$. Suppose that the spectral density $\whg$ of the process satisfies condition \eqref{eq:exp2} with some $c_0,c>0$. Let $A$ be any infinite subset of the segment $[-1,1]$.  Then all the random variables $(\xi_x)_{x\in [-1,1]}$ belong to the closed linear span of the random variables $(\xi_y)_{y\in A} $ in $L^2(\Omega,\mathsf P)$.     
\end{theor}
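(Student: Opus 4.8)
The plan is to pass from the Gaussian Hilbert space to a concrete function space via the spectral representation, reduce the statement to a density question there, and then exploit the exponential decay \eqref{eq:exp2} to force analyticity of a certain auxiliary function, after which the identity theorem closes the argument.

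First I would use the spectral isometry. Writing $e_x(t):=e^{itx}$, stationarity with spectral density $\whg$ gives $\langle e_x,e_y\rangle_{L^2(\whg\,dt)}=\int_\R e^{it(x-y)}\whg(t)\,dt=G(x-y)=\mathsf E(\xi_x\xi_y)$, so the map $e_x\mapsto\xi_x$ extends to an isometry of $\overline{\operatorname{span}}\{e_x:x\in[-1,1]\}\subset L^2(\whg\,dt)$ onto the closed span $H$ of $\{\xi_x\}$ in $L^2(\Omega,\mathsf P)$. Under this identification the closed linear span of $\{\xi_y\}_{y\in A}$ corresponds to $V:=\overline{\operatorname{span}}\{e_y:y\in A\}\subset L^2(\whg\,dt)$. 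It therefore suffices to prove $e_x\in V$ for every $x\in[-1,1]$, which by the projection theorem ($V=(V^\perp)^\perp$) amounts to showing that every $h\in L^2(\whg\,dt)$ with $h\perp V$ satisfies $\langle e_x,h\rangle=0$ for all $x\in[-1,1]$.

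Given such an $h$, I would introduce $F(z):=\int_\R e^{itz}\,\overline{h(t)}\,\whg(t)\,dt$ for complex $z$. The key point, and the step where \eqref{eq:exp2} enters essentially, is that $F$ is holomorphic on the strip $|\operatorname{Im}z|<c/2$. This follows from dominated convergence and Morera (equivalently, differentiation under the integral sign) once one checks absolute convergence, via the Cauchy–Schwarz estimate
\[\int_\R |h(t)|\,\whg(t)\,e^{|t|\,|\operatorname{Im}z|}\,dt\le \|h\|_{L^2(\whg\,dt)}\Big(\int_\R \whg(t)\,e^{2|t|\,|\operatorname{Im}z|}\,dt\Big)^{1/2},\]
whose right-hand side is finite whenever $2|\operatorname{Im}z|<c$, by \eqref{eq:exp2}. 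The strip $|\operatorname{Im}z|<c/2$ is connected, open, and contains the real segment $[-1,1]$. Since $h\perp V$ we have $F(y)=\langle e_y,h\rangle=0$ for every $y\in A$; as $A$ is an infinite subset of the compact segment $[-1,1]$, it has an accumulation point inside the strip, so the zeros of $F$ accumulate in its domain and the identity theorem yields $F\equiv0$. In particular $F(x)=\langle e_x,h\rangle=0$ for all real $x\in[-1,1]$, which is exactly the required orthogonality; translating back through the isometry gives $\xi_x\in\overline{\operatorname{span}}\{\xi_y\}_{y\in A}$ for every $x\in[-1,1]$.

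The main obstacle is the analyticity step: one must verify that the tail bound \eqref{eq:exp2} is strong enough to push the integral defining $F$ into a complex strip and to justify holomorphic dependence on $z$ there. Once analyticity on a neighbourhood of $[-1,1]$ is secured, the identity theorem does the remaining work, and this is precisely the mechanism by which fast spectral decay destroys the NEB property: a whole segment's worth of variables $\{\xi_x\}_{x\in[-1,1]}$ is already determined in $L^2$ by any infinite subfamily with an accumulation point.
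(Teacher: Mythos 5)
Your proof is correct, but it takes a genuinely different route from the paper's. You argue by duality: reduce $e_x\in V$ to showing that every $h\perp V$ has $\langle e_x,h\rangle=0$, then observe that $F(z)=\int_\R e^{itz}\overline{h(t)}\,\whg(t)\,dt$ is holomorphic in the strip $|\operatorname{Im}z|<c/2$ (your Cauchy--Schwarz estimate is exactly what \eqref{eq:exp2} is needed for, and the Morera/Fubini justification is standard), so the vanishing of $F$ on the infinite set $A$, which accumulates in $[-1,1]$ by compactness, forces $F\equiv 0$ by the identity theorem. The paper instead works in the primal: it explicitly approximates $\phi_x=e^{ixt}$ by Lagrange interpolation at nodes $x_k$, bounds the interpolation error via divided differences and the Hermite--Genocchi formula by $\frac{t^{K}}{K!}\prod_k|x-x_k|$, and uses \eqref{eq:exp2} to show the $L^2(\R,\whg)$ error is $O(2^{-2K})$ provided all nodes lie within $c/4$ of $x$; since this lemma is local, the paper then needs a propagation argument (Statements 1--3: nonempty interior plus a $c/4$-step expansion) to cover all of $[-1,1]$. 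Your identity-theorem argument is shorter and handles the whole segment in one stroke, making the link between fast spectral decay and analyticity completely transparent; but it is soft, producing no explicit approximants and no rate. The paper's constructive bound is not just a stylistic choice: the intermediate inequality \eqref{eq:th1_1} is reused verbatim at the start of the proof of Theorem \ref{thm:2}, where it becomes the upper bound in \eqref{eq:th2main}, so the quantitative form of the lemma is what powers the rest of the paper. As a self-contained proof of Theorem \ref{thm:1} alone, however, your argument is complete and valid.
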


We next indicate a class of optimization problems where the EI optimization trajectory is provably not dense in $[-1,1]$.  

In proving this result, we have found especially useful a pair of asymptotic bounds for the conditional variance, which we will state now as a separate theorem.
 
Suppose that the spectral density is represented in the form
\begin{equation}
\label{eq:s0}
\whg(t)=e^{-S(|t|)}=e^{-T(\ln|t|)}
\end{equation}
with some functions $S\in C^2(\R_+), T\in C^2(\R)$.
We will assume that 
\begin{equation}
S'(t),S''(t)\ge 0\quad \text{ for } t>0,\label{eq:s1}
\end{equation}
and
\begin{equation}
S'(t)\to +\infty\quad\text{ as } t\to +\infty.\label{eq:s2}
\end{equation}
Condition \eqref{eq:s1} implies, in particular, that $T$ is convex, since
\begin{equation}\label{eq:t00}
T''(s)=(S(e^s))''=e^sS'(e^s)+e^{2s}S''(e^s)\ge 0.
\end{equation}
Also, \begin{equation}\label{eq:t1}
T'(s)\to +\infty\quad\text{ as } s\to +\infty,
\end{equation} 
since
\begin{equation}\label{eq:ts}
T'(s) = e^sS'(e^s).
\end{equation}

Let $T^*$ be the Legendre transform of $T$:
\[T^*(q) = \max_{s\in\R}(qs-T(s)).\]
Then, by \eqref{eq:t1}, $T^*(q)$ is finite for all sufficiently large $q$; the point $s^*$ where the maximum is attained satisfies the condition $T'(s^*)=q$. 

\begin{theor}\label{thm:2}
Suppose that the spectral density of the covariance function $G$ is represented in the form \eqref{eq:s0} so that  conditions \eqref{eq:s1},\eqref{eq:s2} hold. Then, for sufficiently large $K$, the following inequalities hold for any $K+1$ different points $x,x_1,\ldots,x_K\in[-1,1]$:
\begin{equation}\label{eq:th2main}
e^{-K}\le
 \frac{\sigma^2_{x; \{x_k\}_{k=1}^K}}{e^{F(K)}\prod_{k=1}^K|x-x_k|^2}\le e^{2K},
\end{equation} 
where \[F(K)=T^*(2K+1)-(2K+1)\ln K.\] 
Furthermore, $F(K)$ monotonically decreases for sufficiently large $K$, and
\begin{equation}\label{eq:f1}
\frac{F(K)}{K}\to -\infty \text { as } K\to+\infty.
\end{equation}
\end{theor}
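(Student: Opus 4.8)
The plan is to reduce the conditional variance to a weighted $L^2$ approximation problem in the frequency domain and then attack the two sides of \eqref{eq:th2main} with rather different tools. Using the geometric interpretation of $\sigma^2_{x;\{x_k\}_{k=1}^K}$ as the squared distance in $L^2(\Omega,\mathsf P)$ from $\xi_x$ to the linear span of $\{\xi_{x_k}\}$, together with the spectral representation $\mathsf E(\xi_u\xi_v)=\int_\R\whg(t)e^{it(u-v)}dt$, I would first rewrite
\[
\sigma^2_{x;\{x_k\}_{k=1}^K}=\min_{a_1,\ldots,a_K\in\R}\int_\R\whg(t)\Big|e^{itx}-\sum_{k=1}^K a_k e^{itx_k}\Big|^2\,dt .
\]
Everything then comes down to understanding how well $e^{itx}$ is approximated, in the weighted space $L^2(\whg\,dt)$, by the exponentials $e^{itx_k}$, and the single most important scalar is the moment integral $\mu_{2K}:=\int_\R\whg(t)t^{2K}dt$, whose size is governed by $T^*$.

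For the upper bound I would choose the coefficients by Lagrange interpolation, $a_k=\ell_k(x)$ with $\ell_k(y)=\prod_{j\ne k}\frac{y-x_j}{x_k-x_j}$. Then the integrand is the interpolation remainder of $y\mapsto e^{ity}$ at the nodes $x_1,\ldots,x_K$ evaluated at $x$, and the Hermite--Genocchi (divided-difference) representation gives the pointwise estimate $\big|e^{itx}-\sum_k\ell_k(x)e^{itx_k}\big|\le\frac{|t|^K}{K!}\prod_{k=1}^K|x-x_k|$. Integrating yields
\[
\sigma^2_{x;\{x_k\}_{k=1}^K}\le\frac{\prod_{k=1}^K|x-x_k|^2}{(K!)^2}\,\mu_{2K}.
\]
It then remains to evaluate $\mu_{2K}$. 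After the substitution $t=e^s$ the integral becomes $2\int_\R e^{(2K+1)s-T(s)}\,ds$, whose exponent is maximized at the point $s^*$ with $T'(s^*)=2K+1$ and equals $T^*(2K+1)$ there; Laplace's method gives $\mu_{2K}\approx e^{T^*(2K+1)}$ up to a subexponential factor, while Stirling gives $(K!)^2\approx e^{2K\ln K-2K}$ up to a subexponential factor. Combining, $\mu_{2K}/(K!)^2$ equals $e^{F(K)}$ times $e^{2K}$ times a subexponential factor, which for large $K$ is absorbed into the allowed factor $e^{2K}$.

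The main obstacle is the lower bound, since it must hold for \emph{every} choice of coefficients. Here I would pass to the Gram-determinant formula $\sigma^2_{x;\{x_k\}_{k=1}^K}=\det\mathbf G_{K+1}/\det\mathbf G_K$, where $\mathbf G_{K+1}$ is the covariance matrix of $(\xi_x,\xi_{x_1},\ldots,\xi_{x_K})$ and $\mathbf G_K$ that of $(\xi_{x_1},\ldots,\xi_{x_K})$. The key technical step is to show that each such Gram determinant of exponentials $\{e^{ity_j}\}$ factors, up to a multiplicative error of size $e^{O(K)}$, as a Vandermonde factor $\prod_{j<l}|y_j-y_l|^2$ times the monomial Hankel determinant $\det(\mu_{p+q})$ divided by $\prod(n!)^2$; this is the ``confluent'' comparison between the exponential system and the monomial system $\{1,t,t^2,\ldots\}$ in $L^2(\whg\,dt)$, obtained by expanding $e^{ity_j}$ in powers of $t$ and applying the Cauchy--Binet/Andr\'eief identity. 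Taking the ratio of the determinants for $K+1$ and $K$ points, the Vandermonde factors collapse to $\prod_{k=1}^K|x-x_k|^2$ and the Hankel factors collapse to $\mu_{2K}/(K!)^2$ up to $e^{O(K)}$, reproducing the same leading quantity $e^{F(K)}\prod_k|x-x_k|^2$ and closing the lower side with the factor $e^{-K}$. (A more self-contained route would exhibit a dual certificate $\phi\in L^2(\whg\,dt)$ orthogonal to all $e^{itx_k}$ with $|\langle e^{itx},\phi\rangle|/\|\phi\|$ large, built from $\prod_k(\xi-x_k)$ times a fixed smooth bump; controlling $\int|\phi|^2\whg$ is then the crux.) Throughout, it is precisely the generous window $[e^{-K},e^{2K}]$ that lets all the subexponential errors --- the Laplace prefactor, the Stirling correction, and the Vandermonde--Hankel comparison --- be discarded.

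Finally, the two asymptotic assertions about $F$ follow from elementary properties of the Legendre transform. Writing $q=2K+1$ and recalling $(T^*)'(q)=s^*$ with $T'(s^*)=q$, I would use the identity \eqref{eq:ts}, $T'(s)=e^sS'(e^s)$, to get $\ln q=s^*+\ln S'(e^{s^*})$, whence $\ln K=s^*+\ln S'(e^{s^*})+O(1)$. Treating $K$ as continuous, $F'(K)=2s^*-2\ln K-2-\tfrac1K=-2\ln S'(e^{s^*})+O(1)\to-\infty$ by \eqref{eq:s2}, which gives the eventual monotone decrease; dividing $F(K)=T^*(2K+1)-(2K+1)\ln K$ by $K$ and using $T^*(q)/q=s^*-T(s^*)/q$ together with the same expression for $\ln K$ shows $F(K)/K\to-\infty$, again because $\ln S'(e^{s^*})\to+\infty$. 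Thus condition \eqref{eq:s2} is exactly what drives \eqref{eq:f1}.
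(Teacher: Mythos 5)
Your setup, your upper bound, and your treatment of the two assertions about $F$ all match the paper's proof in substance: the reduction to weighted $L^2$ approximation of $e^{itx}$ by the $e^{itx_k}$, the Lagrange/Hermite--Genocchi estimate $\sigma^2_{x;\{x_k\}_{k=1}^K}\le \prod_k|x-x_k|^2\,\mu_{2K}/(K!)^2$, the substitution $t=\pm e^s$ with a Laplace evaluation of $\mu_{2K}$, and the Legendre-duality computations (the paper phrases monotonicity of $F$ as $s-\ln T'(s)\to-\infty$, which is exactly your $-2\ln S'(e^{s^*})\to-\infty$). One gloss worth flagging: the bound $\int_\R e^{(2K+1)s-T(s)}ds\le C e^{T^*(2K+1)}$ is not purely formal, since the left tail of the integrand is not controlled by curvature near $s_K^*$; the paper handles this by constructing an explicit comparison function $\chi$ with $\chi(0)=\chi'(0)=0$ and $\chi''(s)\le T''(s_K^*+s)$ for all $s$, which works for large $K$ because $T''\ge T'\to\infty$. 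This is minor and fixable.

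The genuine gap is the lower bound, which is the actual content of the theorem. Your key claim --- that the Gram determinant of the exponentials $\{e^{ity_j}\}$ in $L^2(\R,\whg)$ equals the Vandermonde factor $\prod_{j<l}|y_j-y_l|^2$ times $\det(\mu_{p+q})/\prod_n(n!)^2$ up to a multiplicative error $e^{O(K)}$, \emph{uniformly over all configurations} of $K+1$ points in $[-1,1]$ --- is asserted, not proven, and ``expanding in powers of $t$ and applying Cauchy--Binet'' does not deliver it. That expansion gives $\det\bigl(e^{it_ay_j}\bigr)=\Delta(it)\Delta(y)\sum_\lambda i^{|\lambda|}s_\lambda(t)s_\lambda(y)\big/\prod_b(\lambda_b+K-b)!$, a series whose terms carry complex phases and whose Schur values at real arguments of mixed sign have no fixed sign; one cannot retain the $\lambda=\emptyset$ (Vandermonde) term and discard the rest, because cancellation against that leading term is exactly what must be excluded, and nothing shows the discarded part is uniformly $e^{O(K)}$-small relative to it (the confluent comparison is exact only in the limit of coalescing points; for spread-out configurations an $e^{O(K^2)}$ discrepancy is entirely consistent with everything you wrote, and $e^{O(K^2)}$ would destroy the theorem). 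Even granting that comparison, you still need $H_{K+1}/H_K\ge e^{-O(K)}\mu_{2K}$; only the opposite inequality $H_{K+1}/H_K=\|\pi_K\|^2\le\mu_{2K}$ is immediate, and the lower direction is another unproven asymptotic statement about general weights $e^{-S(|t|)}$. Your fallback ``dual certificate'' also fails as sketched: for $h(y)=\prod_k(y-x_k)\,b(y)$ with $b$ a smooth compactly supported bump, $\|\phi\|^2_{L^2(\whg)}=\int|\widehat h(t)|^2e^{S(|t|)}dt=\infty$, because by Paley--Wiener $\widehat h$ cannot decay super-exponentially while $e^{S(|t|)}$ grows super-exponentially. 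Compare with the paper's route, which avoids all asymptotic determinant comparisons: it restricts the integral to the window $|t|\le(K+1)t_0/2$ with $e^{s_K^*}=(K+1)t_0/2$, pulls out $\whg$ at the endpoint by monotonicity, folds the window into $K+1$ translates of a length-$t_0$ interval so that the minimization becomes a finite-dimensional distance computation for the vectors $(1,z,\ldots,z^K)$ with $z=e^{ixt_0}$, $z_k=e^{ix_kt_0}$, and then applies an \emph{exact} determinant identity (its Lemma 2) whose denominator is a sum of elementary symmetric functions of unimodular numbers and hence at most $2^K$. That discretization-plus-exact-formula step is what yields a configuration-uniform lower bound with only $e^{O(K)}$ loss, and it is precisely the ingredient your proposal is missing.
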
 

An example of a  family of spectral densities covered by this theorem is
\begin{equation}\label{eq:cov}
\whg(t) =e^{-a|t|^b}
\end{equation}
with $a>0,b>1$. In particular,  with $b=2$ this gives the Gaussian covariance functions
\begin{equation}
\label{eq:gauss}
G(x)=\frac{1}{2\sqrt{\pi a}}e^{-\frac{x^2}{4a}}.
\end{equation} 
Spectral densities \eqref{eq:cov} correspond to 
\begin{align*}
S(|t|) &= a|t|^b,\\
T(s) &= ae^{bs},
\end{align*}
so that conditions \eqref{eq:s1},\eqref{eq:s2} hold for all $a>0,b>1$. We find in this case
\begin{align*}
T^*(q) &=\frac{q}{b}\left(\ln\frac{q}{ab}-1\right),\\
F(K) &= \frac{2K+1}{b}\left(\ln\frac{2K+1}{ab}-b\ln K-1\right).
\end{align*}

We state now our main result on the EI optimization. 
Recall that if $G$ is a positive definite kernel, then $G(0)=\max_{x\in\R} G(x)$.

\begin{theor}\label{thm:3}
Under assumptions of Theorem \ref{thm:2}, consider optimization problem \eqref{eq:mainopt} on $\mathcal D = [-1,1]$ with the objective function \[f = -G.\]
Suppose that the EI optimization with the kernel $G$ starts from the point $x_1=0$ (i.e., the point where the minimum is already attained). Then the optimization trajectory $\{x_k\}_{k=1}^{\infty}$ converges to 0; in particular the trajectory is not dense in $[-1,1]$. Moreover,  for sufficiently large $K$
\begin{equation}
\label{eq:thm3}
e^{2^K F(K)} \le |x_{K+1}| \le e^{F(K)/3},
\end{equation} 
where $F$ is as defined in Theorem \ref{thm:2}.
\end{theor}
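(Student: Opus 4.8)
The plan rests on a special algebraic feature of the choice $f=-G$. Since $x_1=0$ and $G$ is translation invariant, $f(x_k)=-G(x_k)=-G(x_k-x_1)$, so the data vector is exactly minus the first column of the Gram matrix: $\mathbf f_K=-\mathbf G_K e_1$ with $e_1=(1,0,\ldots,0)^t$. First I would exploit this to get $\mathbf G_K^{-1}\mathbf f_K=-e_1$, so that by \eqref{eq:g} the conditional mean is $m_{x;\{x_k,f(x_k)\}}=-\mathbf g_{K,x}^t e_1=-G(x-x_1)=-G(x)=f(x)$ for every $x$ and every $K$; thus the kriging mean reproduces the objective exactly at each step. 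Moreover, since $G(0)=\max_x G(x)$ and $x_1=0$, the running best value is always the global optimum, $f^*_K=-G(0)=f^*$. Writing $\delta(x):=G(0)-G(x)\ge 0$, this gives $f^*_K-m_{x}=-\delta(x)\le 0$, and the standard Gaussian expected-improvement formula reduces $I_K$ to $I_K(x)=\sigma_x\,\psi(-\delta(x)/\sigma_x)$, where $\sigma_x:=\sigma_{x;\{x_k\}_{k=1}^K}$, $\psi(u)=u\Phi(u)+\varphi(u)$, and $\Phi,\varphi$ are the standard normal cdf and pdf.

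The remaining task is the pure maximization of $x\mapsto I_K(x)$. I would use that $\psi$ is positive and strictly increasing ($\psi'=\Phi$), with $\psi(0)=1/\sqrt{2\pi}$ and tail behaviour $\psi(u)\sim\varphi(u)/u^2$ as $u\to-\infty$, so that $I_K(x)$ is governed by a competition between the prefactor $\sigma_x$, which rewards large conditional variance, and the exponential damping $\exp(-\delta(x)^2/2\sigma_x^2)$, which switches on once $\delta(x)\gg\sigma_x$. For the two inputs I would feed in Theorem \ref{thm:2}, which pins $\sigma_x^2$ to $e^{F(K)}\prod_{k}|x-x_k|^2$ up to a factor $e^{\pm O(K)}$, together with the behaviour of $\delta$: since $-G''(0)=\int t^2\whg(t)\,dt>0$, one has $\delta(x)\asymp x^2$ near $0$, while $\delta$ is bounded below by a positive constant outside any fixed neighbourhood of $0$.

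For the upper bound in \eqref{eq:thm3} I would argue by comparison. Away from $0$ the mean gap is $\delta(x)=O(1)$ while $\sigma_x^2=O(e^{F(K)})$, so the damping factor is $\exp(-\mathrm{const}/e^{F(K)})$, i.e. doubly (super)exponentially small; by contrast, testing a point at distance $|x|\sim e^{F(K)/2}$ from $0$ (where the Gram factor $|x-x_1|=|x|$ appears) makes $\delta(x)/\sigma_x=O(1)$ and yields $I_K(x)\gtrsim e^{F(K)}$ up to the slack $e^{O(K)}$ and the product $\prod_{k\ge2}|x_k|$. Because $F(K)/K\to-\infty$, this near-$0$ value dwarfs every value attained outside a shrinking neighbourhood of $0$, forcing the maximizer inside it; tracking the $e^{\pm O(K)}$ slack and the constants in $\delta$ then yields $|x_{K+1}|\le e^{F(K)/3}$.

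The lower bound, and with it the precise rate, is where the real work lies, and I expect it to be the main obstacle. Running an induction hypothesis that all earlier points already sit in a tiny neighbourhood of $0$, for $|x|$ smaller than all $|x_k|$ ($k\ge2$) one has $\prod_{k}|x-x_k|^2\approx x^2\prod_{k\ge2}|x_k|^2$, and optimizing $\sigma_x\exp(-\delta^2/2\sigma_x^2)$ places the maximizer at $|x_{K+1}|\asymp e^{F(K)/2}\prod_{k\ge2}|x_k|$. Taking logarithms turns this into a recursion $\ln|x_{K+1}|\approx\tfrac12 F(K)+\sum_{k\ge2}\ln|x_k|+O(K)$, whose increments roughly double $\ln|x_K|$ at each step; solving it produces the iterated factor $2^K$ and the bound $|x_{K+1}|\ge e^{2^K F(K)}$. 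The delicate points will be to carry the $e^{\pm O(K)}$ and constant factors through the recursion without destroying the two-sided estimate, to verify that the induction hypothesis is actually propagated, and to confirm that the interior maximizer found near $0$ is the global one over $[-1,1]$ and not a competing local maximizer. Finally, since $F(K)/K\to-\infty$, both bounds in \eqref{eq:thm3} tend to $0$, giving $x_K\to0$ and hence non-density of the trajectory.
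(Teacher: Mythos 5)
Your proposal follows essentially the same route as the paper's proof: the exact reproduction $m_{x;\{x_k,f(x_k)\}_{k=1}^K}=f(x)$ via $\mathbf f_K=-\mathbf G_K e_1$, the reduction of the expected improvement to $\sigma_x\,\psi(-\delta(x)/\sigma_x)$ with Gaussian-tail bounds, the two-sided estimates from Theorem \ref{thm:2} combined with $\delta(x)\asymp x^2$, the doubling recursion $\ln|x_{K+1}|\approx \mathrm{const}\cdot F(K)+\sum_{k\ge 2}\ln|x_k|$ that produces the $2^K$ factor, and the damping comparison that forces the maximizer toward $0$ are exactly the steps of the paper's two lemmas. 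The one caution is logical ordering: in the paper the upper bound $|x_{K+1}|\le e^{F(K)/3}$ must come \emph{last}, since ruling out far maximizers compares $\exp\{-e^{-F(K)/4}\}$ against the already-established lower bound $I_K(x_{K+1})\ge e^{2^K F(K)}$ --- your near-$0$ test value carries the factor $\prod_{k\ge 2}|x_k|^2$, itself doubly exponentially small, so the claim that it ``dwarfs'' the far values cannot follow from $F(K)/K\to-\infty$ alone and needs the lower-bound recursion as input, a reordering your induction framework can accommodate.
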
 

As pointed out in Proposition \ref{thm:prop1}, Theorem \ref{thm:3} implies that there are continuous objective functions for which the EI optimization is inconsistent. Such functions can be obtained by modifying the objective function $-G$ on a set not containing points of the corresponding trajectory $\{x_k\}_{k=1}^{\infty}$. 
Recall that under assumptions of Theorems \ref{thm:2} and \ref{thm:3} the kernel $G$ is analytic. We cannot modify $-G$ preserving its analyticity, but can modify it preserving its infinite smoothness. Recalling the family of examples discussed above, we obtain, in particular, the following corollary regarding the  Gaussian covariance function.

\begin{corol}
For any Gaussian covariance function \eqref{eq:gauss}, there exists an objective function $f\in C^{\infty}([-1,1])$ such that the EI optimization of $f$ starting with $x_1=0$ is not consistent. 
\end{corol}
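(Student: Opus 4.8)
The plan is to exploit the fact that, by \eqref{eq:ai}, the EI map $\mathcal A_K$ depends on the objective function only through the finitely many observed pairs $\{x_k,f(x_k)\}_{k=1}^K$ together with the fixed kernel $G$. Hence two objective functions that agree at every point the algorithm ever visits must produce the same trajectory. I would therefore take the function $f_0=-G$ of Theorem \ref{thm:3}, whose hypotheses hold for the Gaussian covariance \eqref{eq:gauss} (the case $b=2$), and whose EI trajectory $\{x_k\}_{k=1}^\infty$ started at $x_1=0$ converges to $0$; I would then perturb $f_0$ only inside a region that this trajectory never meets, engineering a new, strictly lower global minimum there.

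Concretely, since $x_k\to 0$, for $\epsilon=\tfrac12$ all but finitely many $x_k$ lie in $(-\tfrac12,\tfrac12)$, so the open interval $(\tfrac12,1)$ contains only finitely many trajectory points; removing them leaves an open subinterval $U\subset(\tfrac12,1)$ with $U\cap\{x_k\}_{k=1}^\infty=\varnothing$. I would then pick a bump $\phi\in C^\infty([-1,1])$ with $\phi\ge 0$, $\operatorname{supp}\phi\subset U$, and $\max\phi>0$, and set $f=-G-c\phi$ with the constant $c$ chosen so large that $\min_{x\in U}f(x)<-G(0)$; this is possible because $-G$ is bounded on $U$ while $c\max\phi\to\infty$. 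Since $-G$ is analytic (hence $C^\infty$) and $\phi\in C^\infty$, we obtain $f\in C^\infty([-1,1])$, and since $\operatorname{supp}\phi\subset U$ we have $f\equiv f_0$ off $U$, so $f(x_k)=f_0(x_k)$ at every trajectory point.

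The crux is to show that $f$ and $f_0$ generate the same EI trajectory. I would argue by induction on $K$: both runs begin at $x_1=0\notin U$, and if $x_1,\dots,x_K$ coincide and all lie outside $U$, then $f(x_k)=f_0(x_k)$ for $k\le K$, whence the functionals $I_{K;\{x_k,f(x_k)\}}$ and $I_{K;\{x_k,f_0(x_k)\}}$ coincide as functions of $x$ and share their maximizer set; selecting the same maximizer gives $x_{K+1}\in\{x_k\}_{k=1}^\infty\subset U^{\mathrm c}$, closing the induction. The main obstacle is making this invariance airtight in the presence of the tie-breaking ambiguity noted after \eqref{eq:opt2}: one must observe that the maximizer \emph{sets} for $f$ and $f_0$ are identical, so the same choice is legitimate for both, and a valid EI run for $f$ reproduces the run for $f_0$ verbatim.

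It then remains to read off inconsistency. As noted in the text, $G(0)=\max_x G(x)$, so $-G(0)$ is the global minimum of $f_0$ and every trajectory value satisfies $f(x_k)=-G(x_k)\ge -G(0)$, with equality at $x_1=0$; therefore $f^*_K=\min_{k\le K}f(x_k)=-G(0)$ for all $K$, giving $\lim_{K\to\infty}f^*_K=-G(0)$. On the other hand the true minimum of $f$ satisfies $f^*\le\min_{x\in U}f(x)<-G(0)$. Hence $\lim_{K\to\infty}f^*_K=-G(0)\ne f^*$, so the EI optimization of $f\in C^\infty([-1,1])$ starting at $x_1=0$ is inconsistent, which is the assertion of the corollary.
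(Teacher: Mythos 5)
Your proposal is correct and follows essentially the same route as the paper: invoke Theorem \ref{thm:3} for the Gaussian kernel (the $b=2$ case), then modify $-G$ by a smooth bump supported on an open set disjoint from the trajectory $\{x_k\}_{k=1}^{\infty}$, so that the EI run is unchanged (it sees only the observed values) while the true minimum moves strictly below $-G(0)$. Your write-up merely fills in details the paper leaves implicit --- the existence of the trajectory-free interval $U$, the induction showing the two runs coincide, and the tie-breaking caveat --- all of which are handled correctly.
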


We briefly discuss now practical implications of these results.

On the one hand, there are certain caveats to their practical interpretation. First, we consider only the simplest version of the EI optimization in 1D, while real applications are mostly higher-dimensional. Second, realistic optimization budgets may be too low in many problems for the indicated asymptotic behavior to be relevant. Third, the theoretical consistency, as such, may in principle be restored by trivial adjustments of the algorithm, e.g., by occasionally alternating the EI trajectory with a fixed dense sequence in $\mathcal D$.  

Nevertheless, our results suggest that, in general, EI algorithms with analytic kernels are not reliable beyond a narrow class of very smooth functions -- at least hard to justify theoretically. Moreover, they may be prone to early ill-conditioning and numerical instabilities due to excessive accumulation of trajectory points (see also the numerical example below). It appears that for practical numerical optimization of generic objective functions, if EI is to be applied, then a more reliable choice for the covariance would be a rough kernel with an inverse polynomial falloff of the spectral density, for example from the Mat\'ern family (see, e.g., \cite{rasmussen}). 

In the next section we report a numerical test of Theorem \ref{thm:3}.
Then, in sections \ref{sec:thm1}--\ref{sec:thm3} we provide the proofs of Theorems  \ref{thm:1}--\ref{thm:3}. 


\section{A numerical example} 
To confirm Theorem \ref{thm:3}, we report direct numerical results of the EI optimization of the objective function $f(x)=-e^{-x^2}$ performed with the kernel $G(x) = e^{-x^2}$. 

In short,  our numerical procedure is as follows. 
At each iteration,  for any trial point $x$ we compute the  parameters $m_x, \sigma_x^2$ of the associated posterior Gaussian variable by explicitly using formulas \eqref{eq:g},\eqref{eq:g2}. 
We then compute the expected improvement at $x$ using the well-known formula (see \cite{JoScWe97a})
\begin{equation*}
I_K(x)=(f^*_K-m_x)\Psi\Big(\frac{f^*_K-m_x}{\sigma_x}\Big)+
\sigma_x\psi\Big(\frac{f^*_K-m_x}{\sigma_x}\Big),
\end{equation*}
where $\psi$ and $\Psi$ and the standard normal density and cumulative distribution function, respectively.
To optimize $I_K(x)$ over $x$, we simply sample $x$ uniformly on a logarithmic scale: precisely, we try $x=\pm e^{-l\epsilon}$, where $\epsilon=0.02$ and $l=0,1,\ldots, 10^4$. 

We should point out that this numerical procedure is quite unstable for our kernel and objective function. 
As the posterior variance of the process rapidly converges to 0 and the trajectory $\{x_K\}$ to $x_1=0$,  computation of the expected improvement involves, in several places, subtraction of almost equal quantities, in particular in \eqref{eq:g2}. 
Also, the matrices $\mathbf G_K$ quickly get ill-conditioned. 
As a result, precision of, for example, the usual ``double''  floating point format, which has the 53-bit significand (approximately 16 decimal digits), is exhausted very soon during this optimization. 
For this reason, we perform our test with the extended precision of 300 decimal digits, using the free library \textsc{Mpmath} \cite{mpmath} for that purpose.
 
The first 10 elements of the trajectory appear then to be reasonably reliably computed, and are shown in Table \ref{fig:numex},  together with the corresponding expected improvements. 
This result confirms Theorem \ref{thm:3}, also suggesting the actual asymptotic of $x_K$ is closer to the lower rather than upper bound in \eqref{eq:thm3}.

\begin{table}[htb]\small
\begin{center}
\begin{tabular}{ccc}\toprule
$K$ & $x_{K}$ & $I_{K-1;\{x_k, f(x_k)\}_{k=1}^{K-1}} (x_K)$\\\midrule
  1 &              0 &     --- \\
  2 &        -0.63 &    0.16\\
  3 &         0.77 &    0.13\\
  4 &         0.23 &   0.025\\
  5 &          -0.1 &  0.0013\\
  6 &     0.0036 & 3.4e-06\\
  7 &   -7.3e-06 & 1.4e-11\\
  8 &    2.8e-11 & 2.2e-22\\
  9 &   -4.1e-22 & 4.5e-44\\
 10 &    7.9e-44 & 1.7e-87\\\bottomrule
\end{tabular}\caption{The first 10 elements of the EI optimization trajectory with the respective expected improvements, for the kernel $G(x)=e^{-x^2}$ and the objective function $f=-G$.}  \label{fig:numex}
\end{center}
\end{table}


\section{Proof of Theorem \ref{thm:1}}  \label{sec:thm1}

Let the Hilbert space $\mathcal H$ be the closed span of the Gaussian random variables $(\xi_x)_{x\in [-1,1]}$ in $L^2(\Omega,\mathbf P)$. We use the canonical isometry between $\mathcal H$ and $L^2(\R, \whg)$:
\begin{equation}\label{eq:iso}
\xi_x\in\mathcal H \longmapsto \phi_x\in L^2(\R, \whg),
\end{equation}
where $$\phi_x(t):=e^{itx},$$ so that
$$\langle \xi_x, \xi_{x'}\rangle_{\mathcal H} = G(x-x')=\int_{\R}e^{itx}e^{-itx'}\whg(t)dt = \langle \phi_x,\phi_{x'}\rangle_{L^2(\R, \whg)}.$$
In terms of this isometry, the claim of Theorem \ref{thm:1} is that for any $x\in[-1,1]$ the function $\phi_x$ can be approximated in $L^2(\R, \whg)$  by finite linear combinations of functions $(\phi_{y})_{y\in A}$.

We first prove the following
\begin{lemma}
Let $x$ be any point in $[-1,1]$, and $\{x_k\}_{k=1}^\infty$ any infinite sequence of points in $[-1,1]$ 
such that $x_{k}\ne x_{l}$ for $k\ne l$ and $|x-x_k|<\frac{c}{4}$ for all $k$, where $c$ is from \eqref{eq:exp2}. 
Then, assuming \eqref{eq:exp2}, $\phi_x$ can be approximated in $L^2(\R, \whg)$ with arbitrary accuracy by finite linear combinations of $\phi_{x_k}$.
\end{lemma}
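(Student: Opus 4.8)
The plan is to recast the statement as a polynomial interpolation problem in an auxiliary complex variable. For each fixed $t$, the function $z\mapsto e^{itz}$ is entire, so I would regard $\phi_x(t)=e^{itx}$ as the value at $z=x$ of this entire function, sampled at the real nodes $x_1,\dots,x_n$ (the first $n$ points of the sequence, which are distinct by hypothesis). Forming the Lagrange interpolation polynomial through these nodes and evaluating it at $z=x$ produces an expression $\sum_{k=1}^n L_k(x)\,e^{itx_k}$, where the coefficients $L_k(x)$ are the Lagrange basis values at $x$ and are therefore independent of $t$. Hence $\sum_{k=1}^n L_k(x)\phi_{x_k}$ is a bona fide finite linear combination of the $\phi_{x_k}$, and the task reduces to showing that its $L^2(\R,\whg)$-distance to $\phi_x$ tends to $0$ as $n\to\infty$.

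Next I would estimate the interpolation remainder pointwise in $t$. Since $\frac{d^n}{dz^n}e^{itz}=(it)^n e^{itz}$ has modulus exactly $|t|^n$ on the real axis, the Hermite--Genocchi integral representation of the divided difference gives
\[
\Big|\,e^{itx}-\sum_{k=1}^n L_k(x)e^{itx_k}\,\Big|
\le \frac{|t|^n}{n!}\prod_{k=1}^n|x-x_k|
\le \frac{1}{n!}\Big(\frac{c|t|}{4}\Big)^n ,
\]
where the last inequality uses the hypothesis $|x-x_k|<\frac{c}{4}$. I expect this step to be the main subtlety: because $e^{itz}$ is complex-valued, the usual real mean-value form of the remainder is unavailable, so one must invoke the simplex (Hermite--Genocchi) or contour-integral representation and use that the relevant derivative has constant modulus along the real line. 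The precise constant $\frac{c}{4}$ in the hypothesis is exactly what is needed to balance the factorial growth that will appear from the weight against the $(n!)^2$ in the denominator.

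Finally I would integrate the square of this bound against the weight, using $\whg(t)\le c_0 e^{-c|t|}$ together with the elementary moment identity $\int_{\R}|t|^{2n}e^{-c|t|}\,dt=\frac{2(2n)!}{c^{2n+1}}$, after which the powers of $c$ cancel and one obtains
\[
\Big\|\,\phi_x-\sum_{k=1}^n L_k(x)\phi_{x_k}\,\Big\|_{L^2(\R,\whg)}^2
\le \frac{2c_0}{c}\cdot\frac{(2n)!}{4^{2n}(n!)^2}
=\frac{2c_0}{c}\cdot\frac{1}{4^{2n}}\binom{2n}{n}
\le \frac{2c_0}{c}\,4^{-n},
\]
where the final step uses the crude bound $\binom{2n}{n}\le 4^n$. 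The squared error therefore decays geometrically in $n$, so $\phi_x$ is approximated with arbitrary accuracy by finite linear combinations of the $\phi_{x_k}$, which is the claim of the Lemma.
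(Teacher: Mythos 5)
Your proof is correct and takes essentially the same route as the paper's: Lagrange interpolation at the first $n$ nodes, the Hermite--Genocchi bound $|t|^n/n!$ on the divided-difference remainder (including the correct observation that the complex-valued integrand rules out the mean-value form), and integration of the squared remainder against $\whg(t)\le c_0e^{-c|t|}$. The only cosmetic difference is the last step, where you use $\binom{2n}{n}\le 4^n$ in place of the paper's appeal to Stirling's formula; both yield the same geometric decay $O(4^{-n})$.
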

\begin{proof}
By the theory of polynomial interpolation (see, e.g., \cite{kincaid2002numerical}), for any positive integer $K$ we can choose coefficients $\lambda_{K,1},\ldots, \lambda_{K,K}$ such that for any polynomial $p$ with $\deg p<K$ we have 
\begin{equation}\label{eq:pi}
p(x)=\sum_{k=1}^{K}\lambda_{K,k} p(x_k),\end{equation}
namely, 
\[\lambda_{K,k} =\prod_{\substack{l=1\\ l\ne k}}^{K}\frac{x-x_l}{x_k-x_l}.\]
The r.h.s. of \eqref{eq:pi} is a polynomial in $x$ of degree $<K$. If a function $p(x)$ is not a polynomial of  degree $<K$, then the difference between the left and right sides of \eqref{eq:pi} can be interpreted as the error of polynomial interpolation and  written in terms of divided differences of $p$:
\[p(x)-\sum_{k=1}^{K}\lambda_{K,k} p(x_k) = p[x, x_1, \ldots, x_{K}]\prod_{k=1}^{K}(x-x_k).\]
By the Hermite--Genocchi formula, \[|p[x, x_1, \ldots, x_{K}]|\le\frac{\big\|\frac{d^{K}p}{dx^{K}}\big\|_{\mathrm{conv}(x,x_1,\ldots,x_{K})}}{K!},\]
where $\|\cdot\|_{\mathrm{conv}(x,x_1,\ldots,x_{K})}$ denotes the maximum over the convex hull of the points $x,x_1,\ldots,x_{K}$.
In particular, if $p(x)=e^{ixt}$ with some $t\in\R$, then 
\[|p[x, x_1, \ldots, x_{K}]|\le \frac{t^K}{K!}.\]
Accordingly,
\[\left|e^{ixt}-\sum_{k=1}^{K}\lambda_{K,k} e^{ix_kt}\right|\le \frac{t^K}{K!}\prod_{k=1}^{K}|x-x_k|\]
and hence 
\begin{equation}
\label{eq:th1_1}
\left\|\phi_x-\sum_{k=1}^K\lambda_{K,k}\phi_{x_k}\right\|^2_{L^2(\R, \whg)}
\le \int_\R\frac{t^{2K}}{(K!)^2}\whg(t)dt \prod_{k=1}^{K}|x-x_k|^2.
\end{equation}
Now recall that $\whg(t)\le c_0e^{-c|t|}$ with some $c_0,c>0$, and $|x-x_k|<\frac{c}{4}$. 
Then
\begin{align*}
\left\|\phi_x-\sum_{k=1}^K\lambda_{K,k}\phi_{x_k}\right\|^2_{L^2(\R, \whg)}
&\le \frac{2c_0}{c^{2K+1}(K!)^2}\left(\frac{c}{4}\right)^{2K}\int_0^{+\infty} s^{2K}e^{-s}ds\\
&\le\frac{2c_0 (2K)!}{c4^{2K}(K!)^2}\\
&= O(2^{-2K})\stackrel{K\to\infty}{\longrightarrow}0,
\end{align*}
where we used Stirling's formula in the last step. 
\end{proof}

Now, let $B$ denote the set of all those points $x$ in $[-1,1]$ for which $\phi_x$ can be approximated by linear combinations of $(\phi_{y})_{y\in A}$. 
We prove that $B=[-1,1]$ in several steps.

\noindent
{\em Statement 1}.  $B$ has a non-empty interior.

Indeed, since $A\subset [-1,1]$ is infinite, we can find an interval of length $\frac{c}{4}$ in $[-1,1]$ 
that contains infinitely many points of $A$. Then, by the above lemma, any point of this interval belongs to $B$.

\noindent
{\em Statement 2}.  If $x$ belongs to the interior of $B$ and $|x'-x|<\frac{c}{4}$ for some $x'\in[-1,1]$, 
then $x'$ also belongs to the interior of $B$. 

Indeed, it follows from the hypothesis that we can find infinitely many distinct points $x_k$ in $B$ 
such that $|x'-x_k|<\frac{c}{4}$ for all of them. 
By the above lemma, $\phi_{x'}$ can then be approximated  by finite linear combinations of $\phi_{x_k}$.
But, since $x_k\in B$, any $\phi_{x_k}$ can in turn be approximated  by finite linear combinations of $(\phi_{y})_{y\in A}$. 
It follows that $\phi_{x'}$ can  be approximated  by finite linear combinations of $(\phi_{y})_{y\in A}$, i.e., 
$x'\in B$.

Now, in the above argument, $x'$ could be replaced by any $x''$ sufficiently close to $x'$ so that $|x''-x|<\frac{c}{4}$ still holds,
and we would get $x''\in B$. It follows that $x'$ belongs not only to $B$, but even to the interior of $B$.

\noindent
{\em Statement 3}. $B=[-1,1]$.

This follows immediately from statements 1 and 2.

This completes the proof of Theorem 1.

  
\section{Proof of Theorem \ref{thm:2}}

We use again the canonical isometry \eqref{eq:iso} to express the conditional variance $\sigma^2_{x; \{x_k\}_{k=1}^K}$
 as
\[
\sigma^2_{x; \{x_k\}_{k=1}^K} =  \min_{\lambda_1,\ldots,\lambda_K}\int_{\R}\bigg|e^{ixt}-\sum_{k=1}^K\lambda_ke^{ix_kt}\bigg|^2\whg(t)dt.
\]

We start now with the proof of the upper bound in \eqref{eq:th2main}.
We already know from the proof of Theorem \ref{thm:1} that (see \eqref{eq:th1_1}) 
\begin{equation}\label{eq:sig1}
\sigma^2_{x; \{x_k\}_{k=1}^K}\le
\frac{1}{(K!)^2}\prod_{k=1}^{K}|x-x_k|^2\int_\R t^{2K}\whg(t)dt.
\end{equation}
We substitute $t=\pm e^s$ in the integral on the r.h.s.:
\begin{equation}\label{eq:sig2}
\int_\R t^{2K}\whg(t)dt = 2\int_\R e^{(2K+1)s-T(s)} ds.
\end{equation}
We can now derive an upper bound for this integral using a basic form of the Laplace method.
Consider the function $\widetilde T_K(s) := (2K+1)s-T(s)$ which is concave by
\eqref{eq:t00}.
Let
\begin{equation}\label{eq:sk}
s_K^*=\arg\max \widetilde T_K(s).
\end{equation}
Using $\widetilde T'_K(s_K^*)=0$ and $\widetilde T''_K = -T''$, we can write 
\begin{align}
\label{eq:tkchi}
\widetilde T_K(s)
&=\widetilde T_K(s_K^*)+\widetilde T'_K(s_K^*)(s-s_K^*)+\int_{s_K^*}^s\left(\int_{s_K^*}^{s_1} \widetilde T''_K(s_2)ds_2\right)ds_1\nonumber\\
&=\widetilde T_K(s_K^*)-\int_{s_K^*}^s\left(\int_{s_K^*}^{s_1} T''(s_2)ds_2\right)ds_1\nonumber\\
&\le \widetilde T_K(s_K^*)-\int_{s_K^*}^s\left(\int_{s_K^*}^{s_1} \chi''(s_2-s_K^*)ds_2\right)ds_1                 \nonumber\\
&= \widetilde T_K(s_K^*)-\chi(s-s_K^*)  \nonumber\\
&= T^*(2K+1)-\chi(s-s_K^*),\quad\text{ for all } s\in\R,
\end{align}
for any $C^2$ function $\chi$ such that $\chi(0)=\chi'(0)=0$ and 
\begin{equation}\label{eq:chi''}
\chi''(s)\le T''(s_K^*+s)\quad\text{for all }s.
\end{equation}
It follows then from \eqref{eq:tkchi} that
\[
\int_\R e^{(2K+1)s-T(s)} ds\le c_0 e^{T^*(2K+1)},
\]
where $c_0=\int_{\R}e^{-\chi(s)}ds$.
By \eqref{eq:s2},\eqref{eq:t00}, 
$T''(s)\stackrel{s\to+\infty}{\longrightarrow}+\infty$,
so we can choose a $\chi$ such that $c_0=\frac{1}{2}$ while \eqref{eq:chi''} and hence \eqref{eq:tkchi} hold for all sufficiently large $K$; for example 
\begin{equation*}
\chi(s) = c_1\cdot
\begin{cases}
6s^2-s^4, & |s|\le 1,\\
8|s|-3, &  |s|>1,
\end{cases}
\end{equation*}
with the appropriate constant $c_1$.
 Using Stirling's formula, we then get from \eqref{eq:sig1},\eqref{eq:sig2}, for sufficiently large $K$,
\[
\sigma^2_{x; \{x_k\}_{k=1}^K}\le
e^{T^*(2K+1)-(2K+1)\ln K+2K}
\prod_{k=1}^{K}|x-x_k|^2,
\]
which is the upper bound in \eqref{eq:th2main}.

To prove the lower bound in \eqref{eq:th2main}, we will use the following lemma.
\begin{lemma}\label{th:lower}
Let $z\in \C$ and $\{z_k\}_{k=1}^K\subset\C$. Let 
\[v=(1,z,z^2,\ldots,z^{K})\in\C^{K+1}.\] 
Similarly, let
\[v_k=(1,z_k,z_k^2,\ldots,z_k^{K})\in\C^{K+1},\quad k=1,\ldots,K.\]   
Then the standard $l^2$ distance $\rho$ in $\C^{K+1}$ between $v$ and the linear span of $\{v_k\}_{k=1}^K$  equals 
\begin{equation}\label{eq:rho}\rho = \frac{\prod_{k=1}^K|z-z_k|}{\Big(1+\sum_{q=1}^K\big|\sum_{1\le k_1<\ldots<k_q\le K}\prod_{t=1}^q z_{k_t}\big|^2\Big)^{1/2}}.\end{equation}
\end{lemma}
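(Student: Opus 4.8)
The plan is to reduce everything to a single inner product against an explicit normal vector, exploiting the codimension-one structure of the span. I would first record that the formula is the correct one precisely when the nodes $z_k$ are pairwise distinct (a quick $K=2$ check with $z_1=z_2$ shows the right-hand side of \eqref{eq:rho} underestimates the true distance otherwise), which is the case relevant to the intended application. Under distinctness, $v_1,\ldots,v_K$ are rows of a Vandermonde matrix with distinct nodes and are therefore linearly independent, so their span $W$ is a $K$-dimensional subspace of $\C^{K+1}$ and $W^\perp$ is one-dimensional. Hence, for any nonzero $n\in W^\perp$, the distance from $v$ to $W$ is simply $\rho=|\langle v,n\rangle|/\|n\|$, and the whole problem collapses to choosing a convenient $n$ and evaluating two scalars.

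The key observation is that the normal direction is encoded by the polynomial vanishing at the nodes. Writing $P(w)=\prod_{k=1}^K(w-z_k)=\sum_{j=0}^K c_jw^j$, Vieta's formulas give $c_j=(-1)^{K-j}e_{K-j}(z_1,\ldots,z_K)$, where $e_q$ is the $q$-th elementary symmetric polynomial and $e_0=1$. Setting $n:=(\overline{c_0},\ldots,\overline{c_K})$, the standard Hermitian inner product yields $\langle v_k,n\rangle=\sum_{j=0}^K z_k^j c_j=P(z_k)=0$ for every $k$, so $n\in W^\perp$. This is the step doing all the work: it is just the statement that the coefficient vector of the node polynomial annihilates the Vandermonde rows.

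It then remains to evaluate numerator and denominator. For the numerator, $\langle v,n\rangle=\sum_{j=0}^K z^j c_j=P(z)=\prod_{k=1}^K(z-z_k)$, so $|\langle v,n\rangle|=\prod_{k=1}^K|z-z_k|$, matching the numerator of \eqref{eq:rho}. For the denominator, reindexing by $q=K-j$ gives $\|n\|^2=\sum_{j=0}^K|c_j|^2=\sum_{q=0}^K|e_q(z_1,\ldots,z_K)|^2 = 1+\sum_{q=1}^K\bigl|\sum_{1\le k_1<\cdots<k_q\le K}\prod_{t=1}^q z_{k_t}\bigr|^2$, using $e_0=1$; this is exactly the squared denominator of \eqref{eq:rho}. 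Dividing $|\langle v,n\rangle|$ by $\|n\|$ produces the claimed expression.

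I do not expect a serious obstacle: the argument is short once the node polynomial is recognized as the normal vector. The only points genuinely requiring care are (i) justifying that $W$ has codimension one, which rests on distinctness of the $z_k$ and is what makes the one-line projection identity $\rho=|\langle v,n\rangle|/\|n\|$ applicable, and (ii) keeping the conjugations in the Hermitian inner product consistent, so that it is $n=\overline{c}$, rather than the coefficient vector $c$ itself, that lies in $W^\perp$.
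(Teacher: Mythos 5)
Your proof is correct, and it takes a genuinely different route from the paper's. The paper computes $\rho$ through the Gram-determinant ratio $\rho^2 = g(v,v_1,\ldots,v_K)/g(v_1,\ldots,v_K)$: the numerator is a Vandermonde determinant, while the denominator is expanded by the Cauchy--Binet formula into a sum of squared $K\times K$ minors $\Delta_{K,s}$, each of which is then evaluated by an inductive row--column manipulation as $\prod_{k<l}(z_l-z_k)$ times an elementary symmetric polynomial; the Vandermonde factors cancel in the ratio, leaving \eqref{eq:rho}. You instead exploit the codimension-one structure: for pairwise distinct nodes the span $W$ of the $v_k$ has a one-dimensional orthogonal complement, spanned by the conjugated coefficient vector $n$ of the node polynomial $P(w)=\prod_{k=1}^K(w-z_k)$, and the projection identity $\rho=|\langle v,n\rangle|/\|n\|$ reduces everything to $|P(z)|$ divided by the $\ell^2$ norm of the coefficients, which Vieta's formulas identify with the right-hand side of \eqref{eq:rho}. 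Your argument is shorter, avoids all determinant calculus, and explains conceptually why elementary symmetric polynomials appear; the paper's machinery is more generic and would still apply if fewer than $K$ vectors were given (higher codimension), where no single normal vector exists and the sum-of-squared-minors structure persists. Both proofs implicitly require the $z_k$ to be pairwise distinct --- the paper's because the Gram ratio needs $g(v_1,\ldots,v_K)\neq 0$ --- and you are right that the stated formula actually fails in the degenerate case (your $K=2$ check), so flagging this hypothesis is a point in your favor; distinctness does hold in the paper's application, where $z=e^{ixt_0}$ and $z_k=e^{ix_kt_0}$ with distinct $x,x_k\in[-1,1]$ and $t_0$ small.
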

\begin{proof}
We have 
\begin{equation}\label{eq:rhog}
\rho^2 = \frac{g(v,v_1,\ldots,v_K)}{g(v_1,\ldots,v_K)},
\end{equation} where $g(\cdot)$ denotes the Gram determinant of the given system of vectors. 
Since $v_k$ and $v$ are $(K+1)$-dimensional,  $g(v,v_1,\ldots,v_K)$ can be computed simply from the Vandermonde determinant for $z, z_1,\ldots, z_K$: 
\begin{equation}\label{eq:vand1}
g(v,v_1,\ldots,v_K) = \prod_{0\le k<l \le K}|z_k-z_l|^2,
\end{equation}
where we have denoted $z_0\equiv z$. 
In order to compute $g(v_1,\ldots,v_K),$ we note first that it can be expressed, 
by the Cauchy-Binet formula, as 
\begin{equation}\label{eq:vand2}
g(v_1,\ldots,v_K) = \sum_{s=0}^K |\Delta_{K,s}|^2,
\end{equation} 
where $\Delta_{K,s}$ is the $K\times K$ minor of the $K\times(K+1)$ matrix $(z_k^t)_{k=1, t=0}^{K,K}$ obtained by removing the row $(z_k^s)_{k=1}^K$. Note that $\Delta_{K,K}$ is the usual Vandermonde determinant, and $\Delta_{K,0}=\Delta_{K,K}\prod_{k=1}^K z_k$. 
We can compute  $\Delta_{K,s}$ for any $s$ in a way similar to the usual inductive evaluation of the Vandermonde determinant. Namely, define for brevity 
\begin{equation*}
\mu_s(t) =
\begin{cases}
t,      & t<s;\\
t+1, & t\ge s.
\end{cases} 
\end{equation*}
Then for  $0<s<K$ we have, performing linear transformations with rows and columns,
\begin{align}\label{eq:detdelta1}
\Delta_{K,s} &= \det\Big(z_k^{\mu_s(t)}\Big)_{\substack{1\le k\le K\\ 0\le t\le K-1}}\nonumber\\
                     &= \det\left(
                               \begin{cases}
                               z_k^{\mu_s(t)}-z_K^{\mu_s(t)}, & k<K\\
                               z_K^{\mu_s(t)},                         & k=K
                               \end{cases}
                               \right)
                               _{\substack{1\le k\le K\\ 0\le t\le K-1}}\nonumber\\   
                     &= (-1)^{K-1}\det
                               \Big(
                               z_k^{\mu_s(t)}-z_K^{\mu_s(t)}
                               \Big)
                               _{\substack{1\le k\le K-1\\ 1\le t\le K-1}}\nonumber\\                                       
                     &= \det
                               \Bigg(\sum_{\tau_t=0}^{\mu_s(t)-1}
                               z_k^{\tau_t}z_K^{\mu_s(t)-1-\tau_t}
                               \Bigg)
                               _{\substack{1\le k\le K-1\\ 1\le t\le K-1}}\prod_{k=1}^{K-1}(z_{K}-z_k)\nonumber\\      
                     &= \det
                               \Bigg(\sum_{\tau_t=\mu_s(t-1)}^{\mu_s(t)-1}
                               z_k^{\tau_t}z_K^{\mu_s(t)-1-\tau_t}
                               \Bigg)
                               _{\substack{1\le k\le K-1\\ 1\le t\le K-1}}\prod_{k=1}^{K-1}(z_{K}-z_k)\nonumber\\    
                      &= \det
                               \left(
                               \begin{cases}
                               z_k^{\mu_s(t)-1},            & t\ne s\\
                               z_k^{s-1} z_K+z_k^{s},  & t=s
                               \end{cases}
                               \right)
                               _{\substack{1\le k\le K-1\\ 1\le t\le K-1}}\prod_{k=1}^{K-1}(z_{K}-z_k)\nonumber\\    
                       &= (z_K\Delta_{K-1,s}+\Delta_{K-1,s-1})\prod_{k=1}^{K-1}(z_K-z_k).
\end{align}
Similar identities hold if $s=0$ or $s=K$, but with one of the terms $\Delta_{K-1,s-1}$, $z_K\Delta_{K-1,s}$ missing:
\begin{equation}\label{eq:detdelta2}
\Delta_{K,0}=z_K\Delta_{K-1,0}\prod_{k=1}^{K-1}(z_K-z_k), 
\quad \Delta_{K,K}=\Delta_{K-1,K-1}\prod_{k=1}^{K-1}(z_K-z_k). 
\end{equation}
Iterating identities \eqref{eq:detdelta1},\eqref{eq:detdelta2} $K$ times, we get
\[\Delta_{K,s} = \prod_{1\le k<l \le K}(z_l-z_k)\sum_{1\le k_1<\ldots<k_{K-s}\le K}\prod_{t=1}^{K-s} z_{k_t}.\]
Substituting this equality in \eqref{eq:vand2} and combining with  \eqref{eq:rhog} and \eqref{eq:vand1}, we get \eqref{eq:rho}  with $q=K-s$.
\end{proof} 

To derive now the lower bound in \eqref{eq:th2main}, fix a $t_0=t_0(K)>0$, to be chosen later. Using monotonicity of $\whg(t)$ for $t\ge 0$, which follows from \eqref{eq:s1}, we write:
\begin{align*}
 \sigma^2_{x; \{x_k\}_{k=1}^K} &=
\min_{\lambda_1,\ldots,\lambda_K}\int_{\R}\bigg|e^{ixt}-\sum_{k=1}^K\lambda_ke^{ix_kt}\bigg|^2\whg(t)dt\\
&\ge \whg\left(\frac{(K+1)t_0}{2}\right)\min_{\lambda_1,\ldots,\lambda_K}\int_{-\frac{(K+1)t_0}{2}}^{\frac{(K+1)t_0}{2}}\bigg|e^{ixt}-\sum_{k=1}^K\lambda_ke^{ix_kt}\bigg|^2 dt\\
& = \whg\left(\frac{(K+1)t_0}{2}\right) \min_{\lambda_1,\ldots,\lambda_K}
\int_{-\frac{(K+1)t_0}{2}}^{-\frac{(K-1)t_0}{2}}\sum_{l=0}^K
\bigg|e^{ixt}e^{ilxt_0}-\sum_{k=1}^K\lambda_k e^{ix_kt}e^{ilx_kt_0}\bigg|^2 dt\\
& \ge \whg\left(\frac{(K+1)t_0}{2}\right) \int_{-\frac{(K+1)t_0}{2}}^{-\frac{(K-1)t_0}{2}} \min_{\lambda_1,\ldots,\lambda_K}
\sum_{l=0}^K
\bigg|e^{ilxt_0}-\sum_{k=1}^K\lambda_k e^{i(x_k-x)t}e^{ilx_kt_0}\bigg|^2 
dt\\
& = \whg\left(\frac{(K+1)t_0}{2}\right) t_0
\min_{\lambda_1,\ldots,\lambda_K}
\sum_{l=0}^K
\bigg|e^{ilxt_0}-\sum_{k=1}^K\lambda_k e^{ilx_kt_0}\bigg|^2\\
& = \whg\left(\frac{(K+1)t_0}{2}\right) t_0\rho^2,
\end{align*}
where $\rho$ is the distance defined as in Lemma \ref{th:lower} for $z=e^{ixt_0}, z_k=e^{ix_kt_0}.$ Since $|z|=|z_k|=1$, the denominator in \eqref{eq:rho} is bounded from above by $2^K$. Therefore,
\[
\sigma^2_{x; \{x_k\}_{k=1}^K}\ge\whg\left(\frac{(K+1)t_0}{2}\right)\frac{t_0}{2^{2K}}\prod_{k=1}^K|e^{ixt_0}-e^{ix_kt_0}|^2.
\]
Let us assume that 
\begin{equation}\label{eq:t0}
t_0<\frac{\pi}{2}.
\end{equation}
In this case, since $x,x_k\in[-1,1]$, we have $\frac{|x-x_k|t_0}{2}<\frac{\pi}{2}$, hence
\[ |e^{ixt_0}-e^{ix_kt_0}|=2\sin\frac{|x-x_k|t_0}{2}\ge \frac{4}{\pi}\frac{|x-x_k|t_0}{2}=\frac{2|t_0|}{\pi}|x-x_k|.\]
Therefore, assuming \eqref{eq:t0}, 
\begin{equation}\label{eq:sigma1}
\sigma^2_{x; \{x_k\}_{k=1}^K}\ge\whg\left(\frac{(K+1)t_0}{2}\right)\frac{t_0^{2K+1}}{\pi^{2K}}\prod_{k=1}^K|x-x_k|^2.
\end{equation}
Now let us choose $t_0$ so that 
\[\frac{(K+1)t_0}{2}=e^{ s_K^*},\]
where $s_K^*$ is given by \eqref{eq:sk}.
Then, if \eqref{eq:t0} holds, we get from \eqref{eq:sigma1}
\begin{align*}
\sigma^2_{x; \{x_k\}_{k=1}^K}
&\ge e^{-T(s_K^*)} e^{(2K+1)s_K^*} \frac{2^{2K+1}}{(K+1)^{2K+1}\pi^{2K}} \prod_{k=1}^K|x-x_k|^2\\
&= e^{T^*(2K+1)-(2K+1)\ln(K+1)}\frac{2^{2K+1}}{\pi^{2K}}\prod_{k=1}^K|x-x_k|^2.
\end{align*}
This implies the lower bound in \eqref{eq:th2main}, since $\frac{4}{\pi^2}>\frac{1}{e}$.
We have to check, however, that condition \eqref{eq:t0} is fulfilled. 
The value $s_K^*$ satisfies the condition $2K+1 = T'(s_K^*)=e^{s_K^*}S'(e^{s_K^*})$.
Since $S'(t)\to +\infty$ as $t\to +\infty$, it follows that 
\begin{equation}\label{eq:sko}
e^{s_K^*} = o(2K+1)\quad \text{ as }K\to\infty. 
\end{equation}
Therefore $t_0\to 0$ as $K\to\infty$, so \eqref{eq:t0} is fulfilled for sufficiently large $K$.  

We now prove \eqref{eq:f1}. Since $T(s_K^*)\ge 0$ for sufficiently large $K$, we have
\[\frac{F(K)}{2K+1}=
\frac{T^*(2K+1)-(2K+1)\ln K}{2K+1} 
= s_K^*-\frac{T(s_K^*)}{2K+1}-\ln K
\le s_K^*-\ln K\stackrel{K\to\infty}{\longrightarrow} -\infty, 
\]
where we used \eqref{eq:sko} in the last step. 

It remains to prove that $F(K)$ monotonically decreases for sufficiently large $K$. We want to show that 
\[\frac{dF(K)}{dK} = 2(T^*)'(2K+1)-2\ln K-\frac{2K+1}{K}<0.\]
It suffices to show that 
\[(T^*)'(2K+1)-\ln (2K+1)\to -\infty,\text{ as } K\to +\infty.\]
By duality of the Legendre transform, this is equivalent to    
\[s-\ln(T'(s))\to -\infty,\text{ as } s\to +\infty,\]
which follows from \eqref{eq:ts},\eqref{eq:s2}.

\section{Proof of Theorem \ref{thm:3}}  \label{sec:thm3}
In this section, $\{x_k\}_{k=1}^{\infty}$ denotes the optimization trajectory obtained by \eqref{eq:history},\eqref{eq:ai} with $x_1=0$.

We start proving Theorem \ref{thm:3} by first noting that, under the hypotheses of the theorem, the mean expected value of the objective function $f=-G$ at each point of $[-1,1]$ is exactly equal to its actual value, throughout the whole optimization process:
\[
m_{x; \{x_k, f(x_k)\}_{k=1}^K} = f(x),\quad \forall x\in[-1,1],\forall K\ge 1.
\] 
Indeed, by \eqref{eq:g}, $m_{x; \{x_k, f(x_k)\}_{k=1}^K}$ is the unique interpolant of the function $f$ at the points $x_1,\ldots,x_K$ having the form $\sum_{k=1}^K\lambda_k G(x-x_k)$ with some coefficients $\lambda_k$.  But $f(x)=-G(x-x_1)$ is of this form, so it is equal to the interpolant.

Since $f$ attains its minimum at $x_1=0$, we have $f^*_K=f^*=-G(0)$ for all $K$, hence  the expected improvement can be written as 
\begin{align} 
I_{K;  \{x_k, f(x_k)\}_{k=1}^K}  (x) 
&= \mathsf{E} \big( -G(0) - \min (-G(0), \xi_x)
\big| \{\xi_{x_k} = f(x_k)\}_{k=1}^K\big) \nonumber \\
&=\frac{1}{\sqrt{2\pi}\sigma_{x; \{x_k\}_{k=1}^K}}\int_{-\infty}^{-G(0)}
\exp\left\{-\frac{(t+G(x))^2}{2\sigma^2_{x; \{x_k\}_{k=1}^K}}\right\}
(-G(0)-t)dt \nonumber \\
&=\frac{\sigma_{x; \{x_k\}_{k=1}^K}}{\sqrt{2\pi}}\int_0^{\infty}
\exp\left\{-\frac{1}{2}\bigg(w+\frac{G(0)-G(x))}{\sigma_{x; \{x_k\}_{k=1}^K}}\bigg)^2\right\}
wdw \label{eq:ie3}
\end{align}
\begin{lemma}\label{lm:3} For any $h\ge 0$
\[ \frac{1}{2}e^{-h^2} \le \int_0^{\infty} e^{-\frac{(w+h)^2}{2}}wdw \le e^{-\frac{h^2}{2}}. \]
\end{lemma}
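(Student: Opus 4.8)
The plan is to reduce the integral to a closed form involving a single Gaussian tail and then estimate that tail. First I would substitute $u=w+h$, which turns the integral into $\int_h^\infty (u-h)e^{-u^2/2}\,du$; the part $\int_h^\infty u\,e^{-u^2/2}\,du$ integrates exactly to $e^{-h^2/2}$, so that
\[ \int_0^\infty e^{-(w+h)^2/2}w\,dw = e^{-h^2/2}-h\,Q(h), \qquad Q(h):=\int_h^\infty e^{-u^2/2}\,du. \]
This identity does the real work. Since $h\ge 0$ and $Q(h)\ge 0$, the subtracted term $h\,Q(h)$ is nonnegative, which gives the upper bound $\int_0^\infty e^{-(w+h)^2/2}w\,dw\le e^{-h^2/2}$ immediately. (One sees the same thing directly by writing $e^{-(w+h)^2/2}=e^{-h^2/2}e^{-w^2/2}e^{-wh}\le e^{-h^2/2}e^{-w^2/2}$ and using $\int_0^\infty w\,e^{-w^2/2}\,dw=1$.)

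For the lower bound the identity reduces the claim $\int_0^\infty e^{-(w+h)^2/2}w\,dw\ge\frac12 e^{-h^2}$ to the sharp tail estimate $h\,Q(h)\le e^{-h^2/2}-\tfrac12 e^{-h^2}$. I would handle this through the auxiliary function $D(h):=e^{-h^2/2}-h\,Q(h)-\tfrac12 e^{-h^2}$, whose nonnegativity is exactly the target. One checks that $D(h)\to 0$ as $h\to\infty$ (all three terms vanish, the slowest being $h\,Q(h)\sim e^{-h^2/2}$), and, using $Q'(h)=-e^{-h^2/2}$, that $D'(h)=h\,e^{-h^2}-Q(h)$. Thus the lower bound would follow from the companion inequality $Q(h)\ge h\,e^{-h^2}$, which makes $D$ nonincreasing down to its limit $0$ and hence nonnegative.

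The main obstacle is precisely this companion inequality $Q(h)\ge h\,e^{-h^2}$: it is nearly tight (the two sides nearly coincide near $h\approx 1.3$), so no crude tail bound such as $Q(h)\le h^{-1}e^{-h^2/2}$ or $Q(h)\le\sqrt{\pi/2}\,e^{-h^2/2}$ will suffice. I would prove it by a sign analysis of $E(h):=Q(h)-h\,e^{-h^2}$: since $h\,e^{-h^2}=\int_h^\infty(2u^2-1)e^{-u^2}\,du$, one has $E(h)=\int_h^\infty\psi(u)\,du$ with $\psi(u)=e^{-u^2/2}-(2u^2-1)e^{-u^2}$, and the sign of $\psi$ (governed by comparing $e^{u^2/2}$ with $2u^2-1$) is $+,-,+$ across two crossings; hence $E$ decreases, increases, then decreases to $E(\infty)=0$, so its global minimum sits at the first sign-change point and it remains only to certify nonnegativity there. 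Because the margin is so small, this final certification is the delicate step. A clean, fully rigorous alternative is to split on $h$: for moderately large $h$ use a sharp Mills-ratio convergent, $Q(h)\le\frac{h^2+2}{h(h^2+3)}e^{-h^2/2}$, which already yields $h\,Q(h)\le e^{-h^2/2}-\tfrac12 e^{-h^2}$ once $(h^2+3)e^{-h^2/2}\le 2$; and for the remaining bounded interval of $h$ dispatch the inequality by a direct quantitative estimate on that compact set.
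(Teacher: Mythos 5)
Your identity $\int_0^\infty e^{-(w+h)^2/2}w\,dw = e^{-h^2/2}-h\,Q(h)$ is correct, and the upper bound is complete (your parenthetical pointwise argument using $wh\ge 0$ is exactly the paper's). But the lower bound is not actually proved: both of your routes terminate in a step you explicitly decline to carry out. Route one needs the companion inequality $Q(h)\ge h\,e^{-h^2}$ for all $h\ge 0$; this is true, but its minimum margin is only about $3\times 10^{-3}$ (attained near $h\approx 1.28$, where the first sign change of your $\psi$ occurs), and you defer precisely the ``certification'' of nonnegativity at that point. Route two's Mills-ratio bound $Q(h)\le\frac{h^2+2}{h(h^2+3)}e^{-h^2/2}$ yields the claim only where $(h^2+3)e^{-h^2/2}\le 2$, i.e.\ roughly $h\ge 1.3$, so the ``remaining bounded interval'' that you leave to an unspecified ``direct quantitative estimate'' contains exactly the near-tight region that defeated route one. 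In both variants the single skipped step is the only genuinely hard one; finishing it honestly would require rigorous numerical evaluation of a Gaussian tail integral to two or three decimal places with certified error, which is a substantial addition, not a detail.

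The difficulty here is self-inflicted, and that is the instructive point: the constants in the lemma are deliberately loose, and the intended proof exploits that slack pointwise instead of evaluating the integral exactly. For the lower bound, $wh\le\tfrac12(w^2+h^2)$ gives $e^{-(w+h)^2/2}\ge e^{-w^2-h^2}$, whence
\[
\int_0^\infty e^{-(w+h)^2/2}w\,dw\ \ge\ e^{-h^2}\int_0^\infty we^{-w^2}\,dw\ =\ \tfrac12 e^{-h^2},
\]
a one-line argument with no tail estimates at all; the paper's upper bound is the equally short $wh\ge 0$ argument you already noted. By passing to the exact value $e^{-h^2/2}-h\,Q(h)$ and then insisting on monotonicity of $D$ (or on a sharp Mills-ratio convergent), you converted a statement with a huge built-in margin --- $\tfrac12 e^{-h^2}$ versus $e^{-h^2/2}$ --- into a Gaussian-tail inequality that is nearly sharp, which is exactly why no clean analytic bound could close it for you.
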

\begin{proof} On the one hand,
\[ \int_0^{\infty} e^{-\frac{(w+h)^2}{2}}wdw 
\le  \int_0^{\infty} e^{-\frac{w^2}{2}}e^{-\frac{h^2}{2}}wdw=e^{-\frac{h^2}{2}},\]
where we have used $hw\ge 0$. On the other hand, 
\[ \int_0^{\infty} e^{-\frac{(w+h)^2}{2}}wdw 
\ge  \int_0^{\infty} e^{-w^2}e^{-h^2}wdw=\frac{1}{2}e^{-h^2},\]
where we have used $hw\le\frac{w^2+h^2}{2}$.
\end{proof}

In the sequel, we shorten the notation for  the expected improvement to $I_K(x)$. 

\begin{lemma} Under the assumptions of Theorem \ref{thm:3}, there exist constants $c_1,c_2>0$, depending only on the kernel $G$, such that  the  following assertions hold for $K$ large enough.
\begin{enumerate}
\item For all $x\in[-1,1]$  
\begin{align}
I_K(x) \label{eq:expz}
&\ge \exp\bigg\{-c_1e^{K-F(K)} \frac{x^2}{\prod_{k=2}^K|x_k-x|^2}\bigg\}
\frac{e^{(F(K)-K)/2}}{2\sqrt{2\pi}} |x|\prod_{k=2}^K|x_k-x|\\
I_K(x) \label{eq:expz1}
&\le \exp\bigg\{-c_2e^{-2K-F(K)} \frac{x^2}{\prod_{k=2}^K|x_k-x|^2}\bigg\}
\frac{e^{F(K)/2+K}}{\sqrt{2\pi}} |x|\prod_{k=2}^K|x_k-x|
\end{align}
\item
If, additionally, 
\begin{equation}\label{eq:zmin}
|x|< \frac{1}{2}\min_{k=2,\ldots,K}|x_k|,\end{equation} 
then  
\begin{align}
I_K(x) \label{eq:exp}
&\ge \exp\bigg\{-c_1(4e)^{K}e^{-F(K)} \frac{x^2}{\prod_{k=2}^K|x_k|^2}\bigg\}
\frac{e^{F(K)/2}(4e)^{-K/2}}{\sqrt{2\pi}} |x|\prod_{k=2}^K|x_k|\\
I_K(x) \label{eq:exp1}
&\le \exp\bigg\{-c_2 \Big(\frac{3e}{2}\Big)^{-2K} e^{-F(K)} \frac{x^2}{\prod_{k=2}^K|x_k|^2}\bigg\}
\frac{ e^{F(K)/2} (\frac{3e}{2})^K}{2\sqrt{2\pi}} |x|\prod_{k=2}^K|x_k|
\end{align}
\end{enumerate}
\end{lemma}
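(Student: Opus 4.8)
The plan is to start from the closed form \eqref{eq:ie3} for the expected improvement and strip it down to two factors: the conditional standard deviation, which Theorem~\ref{thm:2} controls, and a Gaussian factor governed by $h:=(G(0)-G(x))/\sigma_{x;\{x_k\}_{k=1}^K}$, which is nonnegative since $G(0)=\max G$. Writing $\sigma_x$ for $\sigma_{x;\{x_k\}_{k=1}^K}$ and applying Lemma~\ref{lm:3} to the integral in \eqref{eq:ie3} gives at once
\[
\frac{\sigma_x}{2\sqrt{2\pi}}\,e^{-h^2}\le I_K(x)\le \frac{\sigma_x}{\sqrt{2\pi}}\,e^{-h^2/2},
\]
so the whole task reduces to inserting good bounds for $\sigma_x$ and for $h$.

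For $\sigma_x$ I would invoke Theorem~\ref{thm:2}; since $x_1=0$, the product $\prod_{k=1}^K|x-x_k|^2$ splits as $x^2\prod_{k=2}^K|x_k-x|^2$, and taking square roots in \eqref{eq:th2main} yields $e^{(F(K)-K)/2}|x|\prod_{k=2}^K|x_k-x|\le \sigma_x\le e^{F(K)/2+K}|x|\prod_{k=2}^K|x_k-x|$. For $h$ I need a quadratic sandwich $c\,x^2\le G(0)-G(x)\le C\,x^2$, valid for \emph{all} $x\in[-1,1]$, with $c,C>0$ depending only on $G$. The upper bound is routine: near $0$ it follows from Taylor's theorem, with $G''(0)=-\int t^2\whg(t)\,dt<0$ finite because $\whg$ decays faster than any exponential under the hypotheses of Theorem~\ref{thm:2}; away from $0$ it follows from continuity of $G$ and $x^2$ being bounded below. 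The lower bound is the genuinely delicate point, as it requires $G(x)<G(0)$ for every $x\ne 0$ in $[-1,1]$; this I would obtain from $G(0)-G(x)=\int\whg(t)(1-\cos tx)\,dt$ together with the strict positivity $\whg>0$, which forces the integrand to be positive off a discrete set of $t$, and then combine with the Taylor expansion near $0$.

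Part~1 then follows by pairing the inequalities in the correct directions. For the lower bound \eqref{eq:expz} I keep the lower bound on $\sigma_x$ in the prefactor and use $h^2\le C^2 e^{K-F(K)}\,x^2/\prod_{k=2}^K|x_k-x|^2$ (from the upper bound on $G(0)-G(x)$ over the lower bound on $\sigma_x$), giving $c_1=C^2$; for the upper bound \eqref{eq:expz1} I use the upper bound on $\sigma_x$ together with $h^2\ge c^2 e^{-2K-F(K)}\,x^2/\prod_{k=2}^K|x_k-x|^2$, giving $c_2=c^2/2$. Points $x$ coinciding with some $x_k$ are trivial, since then $\sigma_x=0$ and $I_K(x)=0$ while the right-hand sides also vanish.

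Finally, Part~2 is pure bookkeeping on top of Part~1. Condition \eqref{eq:zmin} yields $\tfrac12|x_k|\le|x_k-x|\le\tfrac32|x_k|$ for each $k\ge 2$, hence $(\tfrac12)^{K-1}\prod_{k=2}^K|x_k|\le\prod_{k=2}^K|x_k-x|\le(\tfrac32)^{K-1}\prod_{k=2}^K|x_k|$. Substituting these into \eqref{eq:expz},\eqref{eq:expz1} and collecting the resulting powers of $\tfrac12$ and $\tfrac32$ with the existing $e^{\pm K}$ into the factors $(4e)^{\pm K/2}$ and $(\tfrac{3e}{2})^{\pm K}$ produces \eqref{eq:exp},\eqref{eq:exp1}, the numerical prefactors being weakened freely since only the exponential rates matter downstream. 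I expect the main obstacle to be the uniform quadratic \emph{lower} bound on $G(0)-G(x)$ away from the origin, where the global strictness must be extracted from $\whg>0$; a secondary risk is merely orienting each inequality (upper versus lower bound on $\sigma_x$ and on $G(0)-G(x)$) consistently so that the exponents land as stated.
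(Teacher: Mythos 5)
Your proposal is correct and follows essentially the same route as the paper's own proof: apply Lemma~\ref{lm:3} to the representation \eqref{eq:ie3}, bound $\sigma_{x;\{x_k\}_{k=1}^K}$ via Theorem~\ref{thm:2} (splitting off the $k=1$ factor $x^2$ since $x_1=0$), sandwich $G(0)-G(x)$ between constant multiples of $x^2$ to control $h$, and deduce Part~2 from $\tfrac12|x_k|\le|x_k-x|\le\tfrac32|x_k|$ under \eqref{eq:zmin}. The only difference is that you spell out a justification of the quadratic sandwich (via the spectral representation and Taylor expansion at $0$), which the paper simply asserts as a consequence of strict positive definiteness of $G$.
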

\begin{proof}\mbox{}
\begin{enumerate}
\item Since $G$ is strictly positive definite, we have $G(0)>G(x)$ for all $x\ne 0$, and hence there exist constants $c_1',c_2'>0$ such that
\[ c_1'x^2\le G(0)-G(x)\le c_2' x^2,\quad\text{ for all } x\in[-1,1].\]
Using Theorem \ref{thm:2}, identity \eqref{eq:ie3} and Lemma \ref{lm:3}, we then get \eqref{eq:expz},\eqref{eq:expz1} with $c_1=(c_2')^2, c_2=(c_1')^2/2$.  
Note that the $k=1$ factor is not present in the products over $k$ in the exponentials, as it equals $x^2$ and has been cancelled with $x^2$ in the numerator. 
\item From the inequalities
\[\frac{1}{2} |x_k|\le |x_k|-|x|\le |x_k-x|\le |x_k|+|x|\le \frac{3}{2}|x_k|\]
we obtain
\[ \Big(\frac{1}{2}\Big)^{K-1}\prod_{k=2}^K|x_k| 
\le\prod_{k=2}^K|x_k-x|\le 
\Big(\frac{3}{2}\Big)^{K-1}\prod_{k=2}^K|x_k| \]
and then substitute these latter inequalities in \eqref{eq:expz}, \eqref{eq:expz1}.
\end{enumerate}
\end{proof}

\begin{lemma} Under the assumptions of Theorem \ref{thm:3}, for all sufficiently large $K$:  
\begin{enumerate}
\item[a)] 
\begin{equation}\label{eq:ikz0} I_K(x_{K+1}) \ge  e^{2F(K)} \prod_{k=2}^K|x_k|^2,
\end{equation}

\item[b)] 
\begin{equation}\label{eq:zk1}
 |x_{K+1}|\ge  e^{2F(K)}\prod_{k=2}^K|x_k|,
\end{equation}
\item[c)] 
\begin{align}\label{eq:zk2}
 |x_{K+1}| &\ge e^{2^K F(K)},\\
 |I_K(x_{K+1})| &\ge e^{2^K F(K)},\label{eq:ikz1}
\end{align}
\item[d)] 
\begin{equation}\label{eq:zk3}
|x_{K+1}| \le e^{F(K)/3}.
\end{equation}
\end{enumerate}
\end{lemma}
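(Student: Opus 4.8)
The plan is to use throughout that $x_{K+1}$ maximizes $I_K$, so that $I_K(x_{K+1})\ge I_K(x)$ for every trial $x\in[-1,1]$, while at the same time $I_K(x_{K+1})$ is controlled from above by evaluating the upper bounds \eqref{eq:expz1},\eqref{eq:exp1} at $x=x_{K+1}$; all four assertions then come from playing the two-sided estimates of the previous lemma against one another, together with the facts $F(K)\to-\infty$ and $F(K)/K\to-\infty$ from Theorem \ref{thm:2}. For part a) I would feed one good trial point into the constrained lower bound \eqref{eq:exp}. Writing $P=\prod_{k=2}^K|x_k|$, the right-hand side of \eqref{eq:exp} has the shape $\exp\{-A\,x^2/P^2\}\,B\,|x|\,P$ with $A=c_1(4e)^Ke^{-F(K)}$ and $B=e^{F(K)/2}(4e)^{-K/2}/\sqrt{2\pi}$; choosing $x=A^{-1/2}P$ makes the exponent equal to $-1$. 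Since $P\le\min_k|x_k|$ (a product of factors $\le 1$) and the scalar $A^{-1/2}$ tends to $0$, this $x$ satisfies the admissibility condition \eqref{eq:zmin} for large $K$, and the value obtained is a constant times $e^{F(K)}(4e)^{-K}P^2$. Because $(4e)^{-K}e^{-F(K)}\to\infty$ (here $F(K)/K\to-\infty$ is decisive), this exceeds $e^{2F(K)}P^2$ for large $K$, giving \eqref{eq:ikz0}.

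For part b) I would combine \eqref{eq:ikz0} with an upper bound on $I_K(x_{K+1})$, splitting on whether \eqref{eq:zmin} holds at $x_{K+1}$. If it does, the upper bound \eqref{eq:exp1} (whose exponential factor is $\le 1$) gives $I_K(x_{K+1})\le \mathrm{const}\cdot e^{F(K)/2}(3e/2)^K|x_{K+1}|P$; dividing this into \eqref{eq:ikz0} yields $|x_{K+1}|\ge \mathrm{const}\cdot e^{3F(K)/2}(3e/2)^{-K}P\ge e^{2F(K)}P$, the last step because $e^{F(K)/2}\to 0$ faster than $(3e/2)^{-K}$. If instead \eqref{eq:zmin} fails, then $|x_{K+1}|\ge\tfrac12\min_k|x_k|\ge\tfrac12 P\ge e^{2F(K)}P$ for large $K$, since $F(K)\to-\infty$. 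Either way \eqref{eq:zk1} follows.

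Part c) is the technical heart, and the step I expect to be the main obstacle. Taking logarithms in \eqref{eq:zk1} and abbreviating $S_K=\sum_{k=2}^{K+1}\ln|x_k|$, the bound becomes the linear recursion inequality $S_K\ge 2S_{K-1}+2F(K)$, whose doubling is the source of the $2^K$ in \eqref{eq:zk2}. Iterating from a fixed index $K_0$ large enough that $F$ is decreasing beyond it (Theorem \ref{thm:2}), and using $F(i)\ge F(K)$ for $K_0<i<K$ to collapse the geometric sum, I obtain $S_{K-1}\ge 2^{K-1-K_0}S_{K_0}+2F(K)\big(2^{K-1-K_0}-1\big)$. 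Substituting into $\ln|x_{K+1}|\ge 2F(K)+S_{K-1}$ gives $\ln|x_{K+1}|\ge 2^{K-1-K_0}S_{K_0}+2^{K-K_0}F(K)$; since $F(K)\to-\infty$, the single term $2^{K-K_0}F(K)$ eventually swamps the fixed quantity $2^{K-1-K_0}S_{K_0}$ and drops below $2^KF(K)$, which is \eqref{eq:zk2}. The estimate \eqref{eq:ikz1} follows identically, starting from \eqref{eq:ikz0}, which reads $I_K(x_{K+1})\ge e^{2F(K)+2S_{K-1}}$. The care needed here is to check that the finitely many initial terms collected in $S_{K_0}$ are harmless, which holds precisely because $|F(K)|$ diverges while $S_{K_0}$ stays fixed.

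Finally, part d) runs the estimates in the reverse direction to cap $|x_{K+1}|$ from above. I would insert $x_{K+1}$ into the global upper bound \eqref{eq:expz1} and compare with the lower bound \eqref{eq:ikz0}. Bounding the polynomial prefactor crudely and independently of $x_{K+1}$ (using $|x_{K+1}|\le 1$, $\prod_{k=2}^K|x_k-x_{K+1}|\le 2^{K-1}$, and $-\ln P=-S_{K-1}\le M\,2^{K-1}$ from the recursion of part c), and then taking logarithms, the strong exponential suppression $\exp\{-c_2 e^{-2K-F(K)}x_{K+1}^2/\prod|x_k-x_{K+1}|^2\}$ forces $x_{K+1}^2/\prod|x_k-x_{K+1}|^2\le e^{F(K)+O(K)}$. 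Multiplying through by $\prod|x_k-x_{K+1}|^2\le 2^{2(K-1)}$ gives $|x_{K+1}|\le e^{F(K)/2+O(K)}$, and since $F(K)/2-F(K)/3=F(K)/6\to-\infty$ dominates the $O(K)$ term, this is $\le e^{F(K)/3}$ for large $K$, which is \eqref{eq:zk3}. The essential point is that the prefactor must be bounded by a quantity free of $x_{K+1}$, so that the argument extracting the bound on $|x_{K+1}|$ does not become circular.
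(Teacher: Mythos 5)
Your parts a)--c) are correct and essentially reproduce the paper's own argument: in a) you choose the same trial point that makes the exponent in \eqref{eq:exp} equal to $-1$; your b) is just the direct, two-case form of the paper's proof by contradiction (the paper handles your second case implicitly, since a violation of \eqref{eq:zk1} forces \eqref{eq:zmin} to hold at $x_{K+1}$); and your c) is the same doubling recursion as in the paper, with the finitely many initial terms disposed of in the same way, via monotonicity of $F$ and $F(K)\to-\infty$.

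Part d) is where you genuinely depart from the paper, and it contains a gap. You compare \eqref{eq:expz1} at $x=x_{K+1}$ with the lower bound \eqref{eq:ikz0}, which forces you to bound $-S_{K-1}=-\ln\prod_{k=2}^K|x_k|$ from above. Your claim $-S_{K-1}\le M\,2^{K-1}$ with $M$ a constant is false: the recursion of part c) yields only
\begin{equation*}
-S_{K-1}\;\le\; 2^{K-1-K_0}\bigl((-S_{K_0})+2(-F(K))\bigr),
\end{equation*}
whose right-hand side grows like $2^{K}\,(-F(K))$, and $-F(K)\to+\infty$, so no constant $M$ can work. Consequently the ``$O(K)$'' in your bound $x_{K+1}^2/\prod_{k=2}^K|x_k-x_{K+1}|^2\le e^{F(K)+O(K)}$ silently absorbs a quantity of order $2^K(-F(K))$ (the same issue already appears in the term $-\tfrac32F(K)$ produced on the right after taking logarithms). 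This bookkeeping closes only if one also knows $\ln(-F(K))=O(K)$. That fact is true and cheap --- taking $s=0$ in the Legendre transform gives $T^*(2K+1)\ge -T(0)$, hence $-F(K)\le T(0)+(2K+1)\ln K$ --- but you neither state nor prove it, and without it the step ``forces $Q\le e^{F(K)+O(K)}$'' is unjustified. The paper's proof avoids needing any upper bound on $|F(K)|$: it derives, as you do, that the hypothesis $|x_{K+1}|>e^{F(K)/3}$ and \eqref{eq:expz1} give $I_K(x_{K+1})\le\exp\{-e^{-F(K)/4}\}$, but then confronts this with the stronger lower bound \eqref{eq:ikz1}, $I_K(x_{K+1})\ge e^{2^KF(K)}$ --- which is precisely why \eqref{eq:ikz1} is part of the lemma --- and concludes from the elementary inequality $c\le e^{c/8}$ (valid for large $c$, applied to $c=-F(K)$) that $2^K\ge e^{-F(K)/8}$, contradicting $F(K)/K\to-\infty$. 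It is telling that your own part d) never uses \eqref{eq:ikz1}; switching to it, as the paper does, repairs your argument without any additional facts about $F$.
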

\begin{proof}\mbox{}
\begin{enumerate}
\item[a)] Given $K$, let us choose $x\in[-1,1]$ so  as to make the expression in braces in \eqref{eq:exp} equal to -1, i.e.,
\[ |x| = c_1^{-1/2} (4e)^{-K/2} e^{F(K)/2} \prod_{k=2}^K|x_k|.
\]
By Theorem \ref{thm:2}, $F(K)/K\to-\infty$, so condition \eqref{eq:zmin} holds if $K$ is large enough, and we can apply inequality \eqref{eq:exp}:
\begin{equation}
I_K(x) \ge c_3 (4e)^{-K} e^{F(K)} \prod_{k=2}^K|x_k|^2,
\end{equation}
with some constant $c_3$ depending on $G$.
By definition of $x_{K+1}$, $I_K(x_{K+1})\ge I_K(x)$. Finally, using again $F(K)/K\to-\infty$, we arrive at \eqref{eq:ikz0}.

\item[b)] Suppose that \eqref{eq:zk1} is violated for infinitely many $K\in\N$. 
Then, for sufficiently large such $K$, the value $x_{K+1}$  satisfies  condition \eqref{eq:zmin}  with $x=x_{K+1}$, and we can apply  bound \eqref{eq:exp1}. 
It follows that for such $K$
\[I_K(x_{K+1}) 
\le 
\frac{ e^{5F(K)/2} (\frac{3e}{2})^K}{2\sqrt{2\pi}} \prod_{k=2}^K|x_k|^2 = o(I_K(x_{K+1})),
\] 
where in the last equality we have used \eqref{eq:ikz0} and that $F(K)/K\to-\infty$. Therefore the hypothesis that \eqref{eq:zk1} is violated for infinitely many $K$ is false. 
\item[c)] 
To show \eqref{eq:zk2}, we continue inequality \eqref{eq:zk1} by iteratively applying it to $x_{k+1}$ with $k=K, K-1,\dots, K_0+1$, where $K_0+1$ is the lowest value for which it is valid: 
\begin{align*}
|x_{K+1}| 
&\ge e^{2F(K)}\prod_{k=2}^K|x_k|\\
&\ge e^{2F(K)+2F(K-1)} \prod_{k=2}^{K-1}|x_k|^2\\
&\ge e^{2F(K)+2F(K-1)+4F(K-2)} \prod_{k=2}^{K-2}|x_k|^4\\
&\ldots\\
&\ge e^{2F(K)+\sum_{k=K_0}^{K-1}2^{K-k}F(k)}
\prod_{k=2}^{K_0}|x_k|^{2^{K-K_0}}\\
&\ge e^{2^{K-K_0+1}F(K)} \bigg(\prod_{k=2}^{K_0}|x_k|^{2^{-K_0}}\bigg)^{2^{K}}\\
&= e^{2^KF(K)}\exp\bigg\{2^K\bigg[(2^{-K_0+1}-1)F(K)+\ln \bigg(\prod_{k=2}^{K_0}|x_k|^{2^{-K_0}}\bigg)\bigg]\bigg\},
\end{align*}
where we assumed without loss of generality that $K_0\ge 2$ and that monotonicity of $F(k)$ established in Theorem \ref{thm:2} holds for $k\ge K_0$. The second exponential factor in the last expression is greater than 1 for sufficiently large $K$ due to $F(K)\to-\infty$, which implies \eqref{eq:zk2}.

Inequality  \eqref{eq:ikz1} is proved in the same way, using \eqref{eq:ikz0} instead of \eqref{eq:zk1} in the first step.
\item[d)]   Suppose that \eqref{eq:zk3} is violated for infinitely many $K\in\N$. 
First, observe that for sufficiently large such $K$ the bound \eqref{eq:expz1}, when applied to $x=x_{K+1}$, implies
\begin{equation}\label{eq:ef3}
I_K(x_{K+1})\le \exp\{-e^{-F(K)/4}\}.
\end{equation}
Indeed, consider the first, exponential factor in \eqref{eq:expz1}. 
Using $|x_{K+1}| > e^{F(K)/3},$   the inequalities $|x_k-x_{K+1}|\le 2$, and $F(K)/K\to -\infty$, we can write:
\[-c_2e^{-2K-F(K)} \frac{x_{K+1}^2}{\prod_{k=2}^K|x_k-x_{K+1}|^2}
\le -c_2 (2e)^{-2K} e^{-F(K)/3} \le -e^{-F(K)/4}\]
for $K$ large enough. As for the remaining factor,  
\[\frac{e^{F(K)/2+K}}{\sqrt{2\pi}} |x_{K+1}|\prod_{k=2}^K|x_k-x_{K+1}|,\]
it is bounded by 1 for large $K$, again due to $F(K)/K\to -\infty$. 
We thus conclude \eqref{eq:ef3}.

Now, combining \eqref{eq:ef3} with \eqref{eq:ikz1}, we see that 
\[
e^{2^K F(K)} \le I_K(x_{K+1}) \le \exp\{-e^{-F(K)/4}\}.
\]
This implies $2^K (-F(K))\ge e^{-F(K)/4}$. Since $c\le e^{c/8}$ for sufficiently large $c,$ we get $2^K\ge  e^{-F(K)/8}$.
But this inequality is violated for all $K$ large enough, since $F(K)/K\to -\infty$.
Therefore our assumption that  \eqref{eq:zk3} is violated for infinitely many $K\in\N$ was wrong.  
\end{enumerate}
\end{proof}
Inequalities \eqref{eq:zk2} and \eqref{eq:zk3} form the statement \eqref{eq:thm3} of Theorem \ref{thm:3}. 
Since $F(K)/K\to -\infty$, from \eqref{eq:zk3} we conclude $|x_{K}|\to 0$, which completes the proof.

\section*{Acknowledgement}
The author thanks the two anonymous referees for the careful reading of the manuscript and several valuable suggestions and corrections.


\begin{thebibliography}{10}

\bibitem{bull}
A.~D. Bull.
\newblock Convergence rates of efficient global optimization algorithms
(2011).
\newblock {\em Journal of Machine Learning Research}, 12:2879-2904 (2011).

\bibitem{forrester}
A. I. J. Forrester, A. S{\'o}bester, and A. J. Keane.
\newblock {\em Engineering design via surrogate modelling: a practical guide.}
\newblock J. Wiley (2008).


\bibitem{Gutmann:2001:RBF:596093.596381}
H.-M. Gutmann.
\newblock A radial basis function method for global optimization.
\newblock {\em J. of Global Optimization}, 19:201--227  (2001).

\bibitem{Jones:2001:TGO:596097.596412}
D.~R. Jones.
\newblock A taxonomy of global optimization methods based on response surfaces.
\newblock {\em J. of Global Optimization}, 21:345--383 (2001).

\bibitem{JoScWe97a}
D.~R. Jones, M.~Schonlau, and W.~J. Welch.
\newblock {A data analytic approach to Bayesian global optimization}.
\newblock In {\em Proceedings of the ASA, Section on Physical and Engineering
  Sciences},  186 -- 191 (1997).

\bibitem{Jones:1998:EGO:596070.596218}
D.~R. Jones, M.~Schonlau, and W.~J. Welch.
\newblock Efficient global optimization of expensive black-box functions.
\newblock {\em J. of Global Optimization}, 13:455--492,  (1998).

\bibitem{katznelson2004introduction}
Y.~Katznelson.
\newblock {\em An introduction to harmonic analysis}.
\newblock Cambridge mathematical library. Cambridge University Press (2004).

\bibitem{kincaid2002numerical}
D.~Kincaid and E.~Cheney.
\newblock {\em Numerical analysis: mathematics of scientific computing}.
\newblock Pure and applied undergraduate texts. American Mathematical Society (2002).

\bibitem{Locatelli:1997:BAO:596056.596117}
M.~Locatelli.
\newblock Bayesian algorithms for one-dimensional global optimization.
\newblock {\em J. of Global Optimization}, 10:57--76 (1997).

\bibitem{mockus1989bayesian}
J.~Mockus.
\newblock {\em Bayesian approach to global optimization: theory and
  applications}.
\newblock Mathematics and its applications: Soviet series. Kluwer Academic (1989).

\bibitem{Mockus_Tiesis_Zilinskas_1978}
J.~Mockus, V.~Tiesis, and A.~Zilinskas.
\newblock The application of bayesian methods for seeking the extremum.
\newblock {\em Towards Global Optimization}, 2:117 -- 129 (1978).

\bibitem{mpmath}
\textsc{Mpmath} library:
\newblock{\url{http://code.google.com/p/mpmath/}}

\bibitem{rasmussen}
C. E. Rasmussen and C. K. I. Williams
\newblock {\em Gaussian Processes for Machine Learning}.
\newblock The MIT Press (2006).

\bibitem{Schonlau:1997:CEG:926099}
M.~Schonlau.
\newblock {\em Computer experiments and global optimization}.
\newblock PhD thesis, Waterloo, Ont., Canada (1997).

\bibitem{schonlau1}
M.~Schonlau and W.~J. Welch.
\newblock Global optimization with nonparametric function fitting.
\newblock {\em Proceedings of the ASA, Section on Physical and Engineering
  Sciences}, 183 -- 186 (1996).

\bibitem{Torn:1989:GO:75021}
A.~Torn and A.~Zilinskas.
\newblock {\em Global optimization}.
\newblock Springer-Verlag New York (1989).

\bibitem{VAZQUEZ:2010:HAL-00440827:1}
E.~Vazquez and J.~Bect.
\newblock {Pointwise consistency of the kriging predictor with known mean and   covariance functions}.
\newblock In {\em {mODa 9 -- Advances in Model-Oriented Design and Analysis}}.
\newblock 14th-19th June 2010, Bertinoro, Italy.

\bibitem{Vazquez20103088}
E.~Vazquez and J.~Bect.
\newblock Convergence properties of the expected improvement algorithm with
  fixed mean and covariance functions.
\newblock {\em Journal of Statistical Planning and Inference}, 140(11):3088 --
  3095 (2010).

\end{thebibliography}
\end{document}